\newtheorem{thm}{Theorem}[section]
\newtheorem{cor}[thm]{Corollary}
\newtheorem{lem}[thm]{Lemma}
\newtheorem{prop}[thm]{Proposition}
\theoremstyle{definition}
\newtheorem{defn}[thm]{Definition}
\theoremstyle{remark}
\newtheorem{rem}[thm]{Remark}
\newtheorem{exmp}[thm]{Example}
\newcommand{\CC}{\mathbb{C}}                
\newcommand{\RR}{\mathbb{R}}                
\newcommand{\ZZ}{\mathbb{Z}}                
\newcommand{\HH}{\mathbb{H}}                
\newcommand{\GL}{\mathrm{GL}}               
\newcommand{\OO}{\mathrm{O}}                
\newcommand{\UU}{\mathrm{U}}                
\newcommand{\SO}{\mathrm{SO}}               
\newcommand{\SL}{\mathrm{SL}}               
\DeclareMathOperator{\Ad}{Ad}
\DeclareMathOperator{\tr}{tr}
\newcommand{\set}[1]{\left\{#1\right\}}     
\begin{document}

\title[Isotropy strata]{On the isotropy stratification of a real representation of a compact Lie group}

\author{P. Azzi}
\address[Perla Azzi]{Université Paris-Saclay, ENS Paris-Saclay, CentraleSupélec, CNRS, LMPS - Laboratoire de Mécanique Paris-Saclay, 91190, Gif-sur-Yvette, France}
\address{Sorbonne Université and Université Paris Cité, CNRS, IMJ-PRG, F-75005 Paris, France.}
\email{perla.azzi@ens-paris-saclay.fr}

\author{R. Desmorat}
\address[Rodrigue Desmorat]{Université Paris-Saclay, ENS Paris-Saclay, CentraleSupélec, CNRS, LMPS - Laboratoire de Mécanique Paris-Saclay, 91190, Gif-sur-Yvette, France}
\email{rodrigue.desmorat@ens-paris-saclay.fr}

\author{J. Grivaux}
\address[Julien Grivaux]{Sorbonne Université, Université Paris Cité, CNRS, IMJ-PRG, F-75005 Paris, France
}
\email{julien.grivaux@imj-prg.fr}

\author{B. Kolev}
\address[Boris Kolev]{Université Paris-Saclay, ENS Paris-Saclay, CentraleSupélec, CNRS, LMPS - Laboratoire de Mécanique Paris-Saclay, 91190, Gif-sur-Yvette, France}
\email{boris.kolev@ens-paris-saclay.fr}

\date{\today}%
\subjclass[2020]{20G20, 57S15, 20G05, 14R20, 14L30, 15A72}
\keywords{Representations of compact Lie groups, Complexification of a compact Lie group, Isotropy stratification, Algebraicity of isotropy strata, Rationality problems}%

\thanks{The authors were partially supported by CNRS Projet 80--Prime GAMM (Géométrie algébrique complexe/réelle et mécanique des matériaux).}


\begin{abstract}
The aim of the present paper is to provide a comprehensive introduction to some algebraic and geometric aspects of real representations of compact Lie groups, as well as some results concerning isotropy strata and restriction of invariants.
\end{abstract}

\maketitle


\section{Introduction}

Let $\rho \colon K \rightarrow \mathrm{GL}(W)$ be a continuous and finite-dimensional linear representation of  a compact Lie group $K$ on a real vector space $W$. Despite the appearances, the object $(\rho, K)$ is essentially an object of algebraic geometry. The first result leading to this observation, which has been rediscovered several times by different authors, is the following:
\begin{thm}[{\cite[Appendix C]{AS1983}, \cite[{\S 5.1, prop. 1}]{Ser1993}}] \label{thm:algorb}
  Given a real representation $(W, \rho)$ of a compact Lie group, all orbits of $\rho$ are real algebraic subsets of $W$. Equivalently, polynomial invariants of $(W, \rho)$ separate the orbits of $\rho$.
\end{thm}
By Peter-Weyl's theorem, $K$ admits a faithful finite-dimensional linear representation. Hence, $K$ can be considered as a matrix group, that is a closed subgroup of the general linear group $\GL(n; \RR)$ for some integer $n$. It follows from theorem \ref{thm:algorb} that $K$ itself is an algebraic set in $\mathrm{M}(n; \RR)$. Another consequence of Peter-Weyl's theorem is that not only $K$ itself is algebraic, but all finite dimensional representations of $K$ are polynomial. This means that their coefficients are restrictions on $K$ of polynomial functions on $\mathrm{M}(n; \RR)$.
This follows directly from the fact that the algebra of representative functions of $K$ is generated by the matrix coefficients of any faithful representation \cite[prop. III.4.2]{BD1985}.
\par
Algebraicity of orbits does not imply that the orbit of \textit{any} algebraic subset of $W$ is algebraic. For instance, the orbit of the the circle $\mathcal{C}(1,1)$ in $\mathbb{R}^2$ under the natural action of $\mathrm{SO}(2; \mathbb{R})$ is the closed disc $\overline{\mathrm{D}}(0,2)$ which is not algebraic. If we stay in the realm of real algebraic geometry, real algebraic sets are usually hard to produce, as opposed to semialgebraic sets (which are sets in $\mathbb{R}^N$ given by a finite number of polynomial equalities and inequalities). The reason relies on the Tarski-Seidenberg theorem, which states that semialgebraic sets are stable by direct image under polynomial mappings. However, it turns out that it is possible to produce other algebraic sets attached to a finite-dimensional compact representation, which are the isotropy strata that we introduce now.
\par
The set of conjugacy classes of closed subgroups of a compact group admits a partial ordering given by the containment relation : $[L_{1}]\preceq [L_{2}]$ if and only if $L_{1}$ is conjugate to a subgroup of $L_{2}$.
If $\rho \colon K \rightarrow \mathrm{GL}(W)$ is a continuous and finite-dimensional real linear representation of a compact Lie group $K$,
the set of conjugacy classes of stabilizers $[K_w]$ of vectors $w$ of $W$ is a finite poset (cf. remark \ref{rem:slice}), whose elements are called orbit types or symmetry types of $\rho$ . It has a minimal element (cf. theorem \ref{thm:principal}), called the generic isotropy type, and a maximal element $[K]$, corresponding to the symmetry of the null vector. To each orbit type $[L]$ is attached a stratum $\Sigma_{[L]}$, which is the set of vectors whose isotropy group has class $[L]$. The $\Sigma_{[L]}$ define a stratification of $W$, and it is possible to prove that the closure of $\Sigma_{[L]}$ is the $K$-orbit of the vector subspace $W^L$ of vectors fixed by a representative $L$ of $[L]$.

\begin{exmp}\label{ex:facile}
  Consider the natural representation of $\SO(3;\RR)$ on the vector space $\HH^{2}(\RR^{3})$ of traceless symmetric $3 \times 3$ matrices with real coefficients given by $\rho(g)A=gAg^{-1}$. For this representation, there are exactly three orbit types which are totally ordered. The invariant algebra $\RR[\mathbb{H}^{2}(\RR^{3})]^{\SO(3;\RR)}$ is the free algebra generated by the polynomial invariants $I_{2} := \tr A^2$ and $I_3 := \tr A^3$.
  \begin{table}[ht]
    \begin{tabular}{ccc}
      \toprule
      \textbf{Description}                & \textbf{Orbit type}                                    & \textbf{Closure of isotropy stratum} \\
      \midrule
      \textrm{three distinct eigenvalues} & $\mathbb{Z}/2\mathbb{Z} \times \mathbb{Z}/2\mathbb{Z}$ & $\HH^{2}(\RR^{3})$                   \\
      \textrm{two distinct eigenvalues}   & $\OO(2;\mathbb{R})$                                    & $I_2^3 - 6I_3^2=0$                   \\
      \textrm{zero matrix}                & $\SO(3;\mathbb{R})$                                    & \{0\}                                \\
      \bottomrule
    \end{tabular}
  \end{table}
\end{exmp}

The study of isotropy stratification of representations of $\mathrm{SO}(3; \RR)$ on spaces of tensors in classical solid mechanics has been widely investigated, in particular for the elasticity tensor and some of its components (see \cite{FV1996}, \cite{AKP2013}, \cite{OKDD2021}), and recently for the piezoeletricity tensor \cite{Hubert_Jalard_2025}. The results of  \cite{AKP2013} and their later extension in \cite{OKDD2021} by different methods proves that the closures of all isotropy strata considered in their examples are closed algebraic sets. This phenomenon also occurs in our toy example \ref{ex:facile} : the stratum $\overline{\Sigma}_{[\mathrm{O}(2;\mathbb{R})]}$ is a closed algebraic subset of $\mathbb{H}^2(\mathbb{R}^3)$. The authors conjecture in the introduction that this is always the case for a general linear representation of a compact Lie group. In this paper, we explain why this conjecture is true.

\begin{thm}\label{thm:first-main-theorem}
  Let $\rho \colon K \rightarrow \mathrm{GL}(W)$ be a continuous and finite-dimensional real linear representation of a compact Lie group $K$, and let $[L]$ be an orbit type of $\rho$. Then the closure of $\Sigma_{[L]}$ is a real algebraic subset of $W$, which can be defined by polynomial equations in the invariants of $\rho$.
\end{thm}

Describing the isotropy strata is usually achieved using normal forms. Given an orbit type represented by a symmetry group $L\subset K$, the set $W^L \cap \Sigma_{[L]}$ may be considered as normal form for this orbit type. However, there is still an ambiguity left since each $K$-orbit in $\Sigma_{[L]}$ can intersect $W^L$ in more than one point. This ambiguity is encoded in the action of the monodromy group $\Gamma_L = N_K(L)/L$ on $W^L$, where $N_K(L)$ is the normalizer of $L$ in $K$. The group $\Gamma_{L}$ acts freely and transitively on the set of possible normal forms of any element in the isotropy stratum. In particular the natural map
\[
  (\Sigma_{[L]} \cap W^L)/\Gamma_{L} \rightarrow \Sigma_{[L]} / K
\]
is bijective. Since $\Sigma_{[L]}$ is not affine in general, it is more pertinent to consider the map
\[
  W^L / \Gamma_{L} \hookrightarrow \overline{\Sigma}_{[L]} / K
\]
which corresponds at the level of polynomial functions to the map
\begin{equation} \label{restriction}
  \mathrm{Im}\,\left(\mathbb{R}[W]^K \rightarrow \mathbb{R}[W^L]^{\Gamma_L}\right) \rightarrow \mathbb{R}[W^L]^{\Gamma_L}
\end{equation}
between two finitely generated $\mathbb{R}$-algebras. The investigation of the map \eqref{restriction} is extremely interesting. In some cases, this map is an isomorphism.

\begin{exmp}[Chevalley restriction theorem] \label{ex:chev}
  If $K$ is a connected compact Lie group with Lie algebra $\mathfrak{k}$, all maximal tori are conjugate (see \cite[ch. IV, thm 1.6]{BD1985}) and they are maximal abelian subgroups. Let $r$ denote the rank of $K$, which is the dimension of any maximal torus $T$ in $K$. Recall that an element $x$ of $\mathfrak{k}$ is \textit{regular} if the closure of the $1$-parameter subgroup of $K$ generated by $x$ is a maximal torus in $K$. Regular points form a dense open subset $\mathfrak{k}^{\mathrm{reg}} $ of $\mathfrak{k}$, and for any $x$ in $\mathfrak{k}^{\mathrm{reg}}$, $K_x$ is the centralizer of the maximal torus of $K$ containing the $1$-parameter subgroup generated by $x$, which is the torus itself. Hence, if $T$ is a maximal torus with Lie agebra $\mathfrak{t}$, if $W=N_K(T)/T$ is the corresponding Weyl group, the restriction morphism $\RR[\mathfrak{k}]^K \rightarrow \RR[\mathfrak{t}]^W$ is an isomorphism (see \cite[ch. VI, prop. 2.1]{BD1985}).
\end{exmp}

In \cite{AKP2013} the authors provided parametric and then implicit equations of isotropy strata using this map. In their examples, the restriction morphism \eqref{restriction} becomes an isomorphism after passing to the field of fractions. This strategy makes it possible to provide algebraic equations for the closed stratum $\overline{\Sigma}_{[L]}$, but the authors could not always avoid inequations in this description. The reason is that describing $\overline{\Sigma}_{[L]}$ in this way amounts to describe $\overline{\Sigma}_{[L]} / K$, which is birational to $W^{L}/ \Gamma_{L}$. The quotient $W^{L}/ \Gamma_{L}$ is completely explicitable in general thanks to Procesi-Schwarz celebrated result \cite{PS1985}, but it has the major drawback to be semialgebraic as soon as $\Gamma_{L}$ is nontrivial. Going back to the restriction morphism \eqref{restriction}, we cannot expect this map to be an isomorphism in full generality, because the right-hand side is a normal algebra (i.e. integrally closed in its field of fractions), which is not always the case for the left-hand side.

\begin{thm}\label{thm:second-main-theorem}
  If $L$ is an isotropy subgroup of a continuous and linear finite-dimensional real representation of a Lie group $K$, then the restriction map \eqref{restriction} is a normalization morphism. In particular, it induces an isomorphism when restricted to the fields of fractions.
\end{thm}

\begin{exmp}
  We go back to our toy example~\ref{ex:facile}. A normal form for the orbit type $[\OO(2; \mathbb{R})]$ is provided by the linear subspace
  \[
    \HH^{2}(\RR^{3})^{\OO(2; \RR)} = \set{A\in \HH^{2}(\RR^{3}),\;\forall g \in \OO(2; \RR), \;gAg^{-1}=A}= \set{\mathrm{diag}(-\lambda,-\lambda,2\lambda);\; \lambda\in \RR}.
  \]
  The monodromy group is trivial, so $\eqref{restriction}$ is given by the map
  \begin{align*}
    \mathbb{R}[x, y]/(x^3-6y^2) & \rightarrow \mathbb{R}[\lambda]   \\
    P(x, y)                     & \mapsto P(6\lambda^2, 6\lambda^3)
  \end{align*}
  which is the normalization of a cusp. At the level of fractions, $\lambda = y/x$.
\end{exmp}

Related results can be found in the literature, over an algebraically closed field (\cite{Lun1972}, \cite{LR1979}, \cite{Popov1992}). Many of them are due to Luna or rely on his slice theorem. As pointed out by an anonymous referee of a first version of this paper, theorems \ref{thm:first-main-theorem} and \ref{thm:second-main-theorem} have a counterpart for complex linear representations. Our strategy consists in studying the complexified representation of $\rho \colon G \rightarrow \mathrm{Gl}(V)$, where $V=W^{\CC}$ and $G=K^{\CC}$, and to be able to go back from the complex to the real context. A first part of this work is developed in \cite[prop. 5.8]{Sch1980} and implies easily theorem \ref{thm:first-main-theorem}. However, the proof provided in \textit{loc.cit.} is flawed at some point (see remark \ref{rem:sink}), even though the result itself is correct. For our second result, its complexified version is exactly \cite[\S 2, prop.]{Lun1975}, but to get theorem \ref{thm:second-main-theorem}, it is necessary to prove that the normalizer of a compact subgroup of a compact Lie group behaves well under complexification. This technical step is still nontrivial, and is corroborated by \cite[lem. 1.1]{LR1979}. Although these types of methods are certainly well known to a few specialists, we regret that such fundamental and simple results as theorems \ref{thm:first-main-theorem} and \ref{thm:second-main-theorem} appear nowhere in this form in the literature. Unfortunately, this state of affairs has misled many researchers (including ourselves) who reproved many partial results because the lack of accessibility of the existing literature. Our aim is to fill this gap.

The main difficulty arising when writing this article is that it involves techniques from a variety of different fields, mainly on one side differential geometry and representation theory of Lie groups, and on the other side algebraic geometry, algebraic groups and geometric invariant theory. Some results and constructions are known in both contexts. One of the most famous is the existence of slices for smooth proper actions, which implies finiteness of orbit types for smooth actions of compact Lie groups. The algebraic counterpart is Luna's slice theorem for actions of reductive groups on complex affine varieties, which is technically more involved. Another one is complexification, which is straightforward for real algebraic groups, but which is more delicate on the differential-geometric side for compact Lie groups. Although there is an abundant literature on action of Lie groups on manifolds (see for instance the books \cite{Bre1972,DK2000,tDie1987}), the case of linear representations is of special interest as it subtly mixes differential and algebraic techniques. Some results are therefore specific to this context. All these considerations have led us to write an article that is half way between a survey on the geometry of linear compact group actions, and an original research paper.

\textbf{Acknowledgments:} The third author warmly thanks Antoine Ducros for numerous discussions, in particular for pointing reference \cite{Ser1993} to him.

\section{Differential geometry background}
\label{sec:diffgeometry}

\subsection{Isotropy strata}

Let $K$ be a compact Lie group. The inclusion relation on the set of closed subgroups of $K$ induces a partial order on the set of their conjugacy classes, called \emph{containment relation}. It is defined as follows:

\begin{defn} \label{def:preceq}
  $[L_{1}]\preceq [L_{2}]$ if and only if $L_{1}$ is conjugate to a subgroup of $L_{2}$.
\end{defn}

To prove that this definition is meaningful, we recall the following classical lemma (in a slightly more general form that what is needed right now).

\begin{lem} \label{lem:finitecc}
  If $G$ is a Lie group with a finite number of connected components and $u \colon G \rightarrow G$ is an injective group morphism, then $u$ is an isomorphism.
\end{lem}

\begin{proof}
  Since $du_e$ is injective, it is an isomorphism of $\mathfrak{g}$. Hence $u$ is a local diffeomorphism near $e$, so $u$ induces an isomorphism of the connected component $G^e$ of $e$. From the diagram
  \[
    \xymatrix{1 \ar[r] & G^e \ar[r] \ar[d]^-{\wr} & G \ar[r] \ar[d]^-{u}  & G/G^e \ar[r] \ar[d] & 1 \\
    1 \ar[r] & G^e \ar[r]  &G \ar[r]  & G/G^e \ar[r]  & 1
    }
  \]
  the action of $u$ on $G/G^e$ is also injective. Since $G/G^e$ identifies with the set of connected components of $G$, it is finite by hypothesis, so the action of $u$ on $G/G^e$ is an isomorphism. Hence $u$ is an isomorphism.
\end{proof}

It follows from this lemma that if $gLg^{-1} \subset L$, then $g$ normalizes $L$. This implies that the containment relation $\preceq$ is antisymmetric.

\begin{rem}
  In the compact case, the fact that $g$ normalizes a closed subgroup $L$ if and only if $gLg^{-1} \subset L$ holds without assuming that $K$ is a Lie group (see \cite[prop. 3.7]{DK2000}).
\end{rem}

Let $\rho: K \to \GL(W)$ be a continuous linear representation of $K$ on a finite dimensional real vector space $W$. It is well-known that $\rho$ is real-analytic. In particular, we can apply all the tools of differential topology concerning smooth actions of Lie groups on manifolds.

We write $k.w:=\rho(k)(w)$ to lighten the notation. For any vector $w$ in $W$, we denote by $K \cdot w$ its orbit under $K$, and by $K_{w}$ the stabilizer of $w$ by the action, it is a Lie subgroup of $K$. The orbit $K \cdot w$ is a smooth compact submanifold of $W$, which is diffeomorphic to the homogeneous space $K/K_{w}$.

\begin{defn} \label{def:orbittype}
  The orbit types of $\rho$ form the set of conjugacy classes of stabilizers $[K_w]$ where $w$ runs through vectors of $W$. It is a partially ordered set for the containment relation.
\end{defn}

\begin{defn} \label{def:strate}
  Let $[L]$ be an orbit type. The fixed locus $W^L$ is the vector subspace of $W$ of elements fixed by $L$. The stratum $\Sigma_{[L]}$ is the set of vectors whose stabilizer is conjugate to $L$. The open fixed locus $W^{\langle L \rangle}$ is $\Sigma_{[L]} \cap W^L$.
\end{defn}

\begin{rem}
The fixed locus $W^{L}$ is defined for arbitrary closed subgroups of $K$ and not only for isotropy subgroups but we will avoid to do so because it can lead to confusions. For instance, some symmetries provided in the classification of \cite{FV1996} are not orbit types.
\end{rem}

\subsection{Slices} \label{sec:slices}

We recall here the general definition of slices, but in our setting not all properties are necessary (see remark \ref{rem:slice}).

\begin{defn}\label{def:slice}
  For any $w$ in $W$, a local slice of the $K$-action at $w$ is an embedded disc $S \subset W$ passing through $w$ such that:
  \begin{enumerate}
    \item[(i)] $S$ is transverse to the orbit $K \cdot w$.
    \item[(ii)] $S$ is stable under $K_{w}$.
    \item[(iii)] If $s_{1}, s_{2}$ are two points in $S$ and if there exists $k$ in $K$ with $k.s_{1}=s_{2}$, then $k$ belongs to $K_{w}$. In particular, $K_{s} \subset K_{w}$ for each point $s \in S$.
    \item[(iv)] $K \cdot S$ is an open neighborhood of the orbit $K \cdot w$.
  \end{enumerate}
\end{defn}

\begin{rem}\label{rem:slice}
  The following additional properties hold :

  -- Given a slice $S$, the open $K$-stable neighborhood $K \cdot S$ is diffeomorphic to the quotient $K \times_{K_{w}} S$, and after linearizing the action of $K_{w}$ on $S$, $K \cdot S$ is locally diffeomorphic to $K \times_{K_{w}} N_{w}$ where $N_{w}$ is the normal space of the orbit at $w$ (viewed as a representation of the isotropy group $K_{w}$).

  -- Slices exist in general for proper actions of Lie groups on manifolds~\cite[thm. 2.3.1]{DK2000}, and in particular for actions of compact Lie groups. However, for linear representations of compact Lie groups, it is possible to produce slices explicitly.

  -- For smooth actions of compact Lie groups on compact manifolds, the theory of slices combined with induction on the dimension yields the finiteness of orbit types (see \cite[Ch VII]{Bor1960} and~\cite[thm 5.11]{tDie1987}).

  -- This implies the finiteness of orbit types for a linear representation of a compact Lie group. To see it, it suffices to take an invariant inner product and then to consider the induced action on the unit sphere of $W$ .

  -- In our setting, where $K$ is compact, (iii) and (iv) are automatic after shrinking $S$.

  -- Property (iv) can be made slightly stronger: up to shrinking $S$, we can even assume that the orbit map from $K \times S$ to $V$ is a submersion, in particular it is an open map.
\end{rem}

\begin{cor}\label{cor:slices2}
  Let $w$ in $W$. Then there exists a neighborhood $U$ of $w$ such that for all $x$ in $U$, $[K_{x}]\preceq [K_{w}]$.
\end{cor}

\begin{proof}
  Let $S$ be a slice at $w$. If $x$ is in $K \cdot S$ then $w$ is in the orbit of a point $y$ in $S$. It follows that $K_{y} \subset K_{w}$, so $[K_x]=[K_{y}] \preceq [K_{w}]$.
\end{proof}

\begin{cor}
  For any orbit type $[L]$, $W^{\langle L \rangle}$ is open in $W^L$.
\end{cor}

\begin{proof}
  Let $x$ be in $W^{\langle L \rangle} $. Thanks to corollary~\ref{cor:slices2} there exists a neighborhood $U$ of $x$ such that for $y$ in $U$, $[K_{y}] \preceq [L]$. If $y$ in $U \cap W^{L}$, $[L] \preceq [K_{y}] \preceq [L]$ so $[K_{y}]=[L]$, which implies that $K_{y}=L$.
\end{proof}

\begin{defn}
  A vector $w \in W$ is called \emph{principal} if there exists a neighborhood $U$ of $w$ such that for all $x$ in $U$, $[K_x]=[K_{w}]$. Equivalently, $w$ is principal if it has locally minimal isotropy.
\end{defn}

\begin{rem} By definition, the set of principal points is open. Our definition of principal points is not totally standard. Some authors, like in \cite[\S 6]{AS1983} have adopted another definition: a vector $w \in W$ is principal if there exists a neighborhood $U$ of $w$ such that  for all $x$ in $U$, $[K_{w}] \preceq [K_{x}]$. For linear representations of compact Lie groups, the two definitions are equivalent.
\end{rem}

Let us denote by $p$ the canonical projection from $W$ to $W/K$. The principal orbit type theorem can be stated in the case of linear representations as follows:

\begin{thm}[{Principal orbit type theorem, see \cite[thm 3.1]{Bre1972}, \cite[\S VI]{AS1983}, \cite[thm 2.8.5]{DK2000}}]\label{thm:principal}
  For any open subset $\Omega$ of $W$ such that $p(\Omega)$ is connected, the set $U_{\Omega}$ of principal points  in $\Omega$ is open, dense in $\Omega$, and $p(U_{\Omega})$ is connected.
\end{thm}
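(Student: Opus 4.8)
The plan is to argue in two stages: first establish the result when $\pi(\Omega)$ is replaced by a connected open subset on which we already control the local structure via slices, and then patch. Since $\Omega$ and $\pi(\Omega)$ being connected is the only global hypothesis, the heart of the matter is to show that the isotropy type of a principal point is \emph{locally constant} on $V$ once we know it is \emph{locally minimal}, and then that this local constancy, combined with the slice structure, forces a single minimal type to occur on a dense open set. First I would invoke Corollary \ref{cor:slices}: every point $v$ has a neighborhood $U_v$ in which all isotropy classes are $\preceq [G_v]$. Taking $v$ to be a point whose isotropy class is minimal among those occurring in a small neighborhood — such a point exists because isotropy classes are finite (recalled in the Preliminaries) and the partial order $\preceq$ has no infinite descending chains on a finite set — one sees that in a possibly smaller neighborhood the isotropy class is \emph{constant} equal to $[G_v]$, so $v$ is principal. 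Hence principal points exist near every point, which already gives that $U_\Omega$ is dense in $\Omega$ (it meets every open subset) and open (by definition of principal).

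Next I would pin down which isotropy class appears on $U_\Omega$. Using the slice $S$ at a principal point $v$ and Remark \ref{rem:slice2}, the tube $G \cdot S \cong G \times_{G_v} S$ has the property that $[G_w] = [G_v]$ for every principal $w$ in the tube: indeed $G_w \subseteq G_v$ up to conjugacy by the slice property, and minimality of $[G_v]$ among nearby types (which holds because $v$ itself is principal) forces equality. So the map $w \mapsto [G_w]$, restricted to the open set of principal points, is locally constant. To propagate this across all of $U_\Omega$ I would use the connectedness of $\pi(\Omega)$: if $\Omega_1, \Omega_2$ are the subsets of $\Omega$ where principal points have isotropy class $[H_1]$, resp.\ not $[H_1]$, these are $G$-stable (isotropy class is a $G$-invariant notion) and open in the set of principal points, hence their images in $V/G$ partition $\pi(U_\Omega)$ into relatively open pieces; since $U_\Omega$ is dense in $\Omega$, one checks $\pi(U_\Omega)$ is dense in $\pi(\Omega)$, and a dense subset of a connected space that is a disjoint union of two of its own relatively open subsets cannot be so split — more carefully, I would show $\pi(\Omega) \setminus \pi(U_\Omega)$ is contained in the closure of each nonempty $\pi(\Omega_i)$, which together with connectedness of $\pi(\Omega)$ forces one of the $\Omega_i$ to be empty. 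Thus a single class $[H]$ occurs on all of $U_\Omega$.

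It remains to see that $\pi(U_\Omega)$ is connected. Here I would argue that $V \setminus U_\Omega$ — the union of the \emph{non-principal} strata — is, locally, contained in a finite union of proper submanifolds (each non-principal stratum has strictly positive codimension, which follows from the tube description: in $G \times_{G_v} N_v$ the locus of points with isotropy strictly larger than the principal one in the slice direction is a proper linear subspace of $N_v$, cut out by the condition of being fixed by a larger subgroup). Consequently $U_\Omega$ is not merely dense but \emph{locally connected through} $\Omega$ in the sense that removing a locally finite union of positive-codimension submanifolds from a connected open set leaves it connected; passing to the quotient, $\pi(U_\Omega)$ is then connected because $\pi$ is an open continuous map (the orbit map is open, Remark \ref{rem:slice2}) and the continuous open image of a connected set is connected. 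The main obstacle, I expect, is the codimension bookkeeping in this last paragraph: one must verify cleanly, using the slice representation $N_v$ of the compact group $G_v$, that the fixed-point subspace $N_v^{K}$ for any subgroup $K \supsetneq G_v$ (up to conjugacy) contributing a larger isotropy type is a \emph{proper} subspace, and that there are only finitely many such $K$ up to conjugacy so that the bad set is genuinely a locally finite union of submanifolds of codimension $\ge 1$ — and then that connectedness of $\Omega$ survives the removal, which is a standard but slightly delicate transversality/general-position argument. Everything else is a bookkeeping exercise with the finite poset of isotropy classes and the local model furnished by slices.
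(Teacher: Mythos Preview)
Your outline for openness and density is fine, but the connectedness argument in the last paragraph has a genuine gap. You assert that removing a locally finite union of submanifolds of codimension $\geq 1$ from a connected open set leaves it connected, and hence that $U_{\Omega}$ itself is connected. This is false: take $G=\ZZ/2\ZZ$ acting on $V=\RR$ by $x\mapsto -x$; the non-principal locus is the origin, a codimension-$1$ submanifold, and $U_{\Omega}=\RR\setminus\{0\}$ is disconnected. (Of course $\pi(U_{\Omega})=(0,\infty)$ \emph{is} connected, so the theorem holds --- just not by your route.) The phrase ``standard but slightly delicate transversality/general-position argument'' hides the real difficulty: codimension-$1$ pieces of the non-principal locus \emph{do} occur, and they can disconnect $U_{\Omega}$, so you cannot first prove $U_{\Omega}$ connected and then push forward by $\pi$.

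What repairs connectedness in the quotient is precisely the observation you are missing: a codimension-$1$ fixed locus in a slice $N_{v}$ is the fixed hyperplane of an element of $G_{v}$ acting as a reflection; this element has order $2$ and \emph{swaps} the two local half-spaces, so that after dividing by $G_{v}$ the image of a small ball minus the hyperplane is connected. The paper's proof organizes this via induction on $\dim V$: when the infinitesimal action is trivial the group acts through a finite quotient and one removes the fixed hyperplanes one at a time, using the reflection to keep the quotient connected at each step; when the infinitesimal action is nontrivial its kernel has codimension $\geq 2$ and one passes to a slice of strictly smaller dimension. The glue is a point-set lemma (Lemma~\ref{lem:connex}): a dense subset $U$ of a connected space $X$ is connected provided every point of $X$ has a neighborhood whose intersection with $U$ is connected. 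This is exactly what lets one upgrade the local statement (small balls in $\pi(\Omega)$ meet $\pi(U_{\Omega})$ in connected sets, by the reflection argument in the slice) to the global one. Your step~2 is both unnecessary for the theorem as stated --- uniqueness of the principal type is a corollary of connectedness of $\pi(U_{\Omega})$, not an ingredient --- and suffers from the same error (``a dense subset of a connected space cannot be so split'' fails for $\RR\setminus\{0\}\subset\RR$), so you should simply drop it.
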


\begin{cor}
  Orbit types of $\rho$ form a finite partially ordered set admitting a minimal element. The stratum corresponding to minimal elements is the dense open set of principal points in $W$.
\end{cor}

\subsection{Closures of isotropy strata}

The key point to study closure of isotropy strata is to prove that corollary \ref{cor:slices2} holds in the Zariski topology. The statement below will be the object of proposition \ref{prop:delicat} later on.

\begin{prop}\label{prop:dense} cf. \cite[lem. 5.5 (2)]{Sch1980}
  Let $L$ be an isotropy subgroup. Then $W^{\langle L \rangle}$ is a nonempty Zariski open subset of $W^{L}$. In particular it is open and dense for the usual topology.
\end{prop}

\begin{prop} \label{prop:closedstratum}
  Given an isotropy type $[L]$, the closure of $\Sigma_{[L]}$ is
  \[
    \overline{\Sigma}_{[L]}=K \cdot W^{L} = \{w \in W, [L] \preceq [K_w]\}= \displaystyle{\bigcup_{\substack{
    {[L'] \in \textrm{or}(\rho)} \\ [L] \preceq [L']}}\Sigma_{[L']}}.
  \]
\end{prop}

\begin{proof}
  The first equality follows by combining the fact that $\Sigma_{[L]} = K \cdot W^{\langle L \rangle}$ with proposition \ref{prop:dense}. The two other ones are straightforward.
\end{proof}

\begin{rem}\label{rem:pourboris}
  The equality $\overline{\Sigma}_{[L]}= \{w \in W, [L] \preceq [K_w]\}$ given by Proposition \ref{prop:closedstratum} is sometimes taken for granted in the literature (like for instance in \cite[\S 3.2]{AKP2013}). Although the right-hand side is closed thanks to corollary \ref{cor:slices2}, it is in general strictly bigger than the closure of the isotropy stratum $\Sigma_{[L]}$ for general actions of compact Lie groups, as shown in the example below.
\end{rem}

\begin{exmp} \label{ex:pourboris2}
  Let $R$ be the diagonal matrix $\mathrm{diag}(1, 1, \mathbf{i}, - \mathbf{i}) $ acting on the complex projective space $\mathbb{P}^3(\CC)$.  It defines an action of $\mathbb{Z}/4\mathbb{Z}$ on $\mathbb{P}^3(\CC)$. If $[x, y, z, t]$ are the homogeneous coordinates on $\mathbb{P}^3(\CC)$, then :

  -- Principals points are points outside the two projective lines $\Delta_1=\{x=y=0\}$ and $\Delta_2=\{z=t=0\}$.

  -- Points with isotropy $\mathbb{Z}/2\mathbb{Z}$ are points on $\Delta_1 \setminus \{A, B\}$ where $A=[0, 0, 1, 0]$ and $B=[0, 0, 0, 1]$.

  -- Points with maximal isotropy $\mathbb{Z}/4\mathbb{Z}$ are points in $\Delta_2$, together with $A$ and $B$.

  We see in this example that taking the closure of the $\mathbb{Z}/2\mathbb{Z}$-isotropy stratum adds the two points $A$ and $B$ that have maximal isotropy, but misses the line $\Delta_2$. This can be seen on the figure below, where the blue (resp. red) color corresponds to the $\mathbb{Z}/2\mathbb{Z}$ (resp. $\mathbb{Z}/4\mathbb{Z}$) isotropy strata.
  \par
  \begin{center}
    \begin{tikzpicture}
      \draw[thick, blue] (0,0) -- (5,1) ;
      \draw[thick, red] (0,-1) -- (5,0) ;
      \draw (5,1) node[right]{$\Delta_1$};
      \draw (5,0) node[right]{$\Delta_2$};
      \draw (2,0.5) node[above]{$A$};
      \draw (3,0.7) node[above]{$B$};
      \draw[red] (2,0.4) node{$\bullet$};
      \draw[red] (3,0.6) node{$\bullet$};
    \end{tikzpicture}
  \end{center}
\end{exmp}

For a compact Lie group representation $(W,\rho)$, the partition into (nonempty) isotropy strata
\begin{equation*}
  W = \Sigma_{[L_{0}]} \cup \dotsb \cup \Sigma_{[L_{n}]}
\end{equation*}
is called its \emph{isotropy stratification} or \emph{orbit type stratification}. It can be shown that it is a real stratification, and even a Whitney stratification (see~\cite[\S 2.7 and thm. 2.7.4]{DK2000}).

\section{Algebraic background}
\label{subsec:invariants}

\subsection{Reductive groups and invariants}

In the material we will present, we will constantly deal with two types of groups:

-- Compact Lie groups: they admit automatically a faithful representation~\cite[thm. III.4.1]{BD1985}, and thanks to Weyl's unitary trick, this representation can be chosen unitary. Besides, every continuous finite-dimensional representation of a compact Lie group is fully irreducible, i.e. splits as a direct sum of irreducible representations.

-- Complex reductive groups : they are complex Lie groups admitting a faithful complex analytic representation and such that every finite-dimensional analytic representation splits as a direct sum of irreducible representations.

Complex reductive groups are complexifications of compact Lie groups~\cite[thm. 4.31]{Lee2002}. If $G \subset \mathrm{GL}_n(\CC)$ is reductive, then $G$ is automatically an algebraic subgroup of $\mathrm{GL}(n, \CC)$~\cite[thm. 5.11]{Lee2002} and any representation of $G$ is rational \cite[\S 5.5, cor. 1]{Ser1993}.

Let $K$ (resp. $G$) be a real (resp.\ complex) Lie group. The linear action of $K$ (resp. $G$) on a real (resp. complex) vector space $W$ (resp. $V$) extends naturally to the polynomial algebra $\mathbb{R}[W]$ (resp. $\mathbb{C}[V]$) via the formula $(g.P)(x) := P(g^{-1}. x)$. The set of all polynomials that are invariants under the action of $K$ (resp. $G$) is a subalgebra of $\mathbb{R}[W]$ (resp.  $\mathbb{C}[V]$) denoted by $\mathbb{R}[W]^K$ (resp. $\mathbb{C}[V]^{G}$) and called the \emph{invariant algebra} of $W$ (resp. $V$).
The foundational result of invariant theory, due initially to Hilbert~\cite{Hil1993} in the case of the action of $\mathrm{SL}(n; \CC)$ on complex binary forms, runs as follows:
\begin{thm}\label{thm:hilbert}~\cite[Theorem X.5.6]{Hel2001},~\cite[Theorem 6.3.1]{LPot1997}
  Let $K$ \emph{(}resp. $G$\emph{)} be a compact \emph{(}resp.\ complex reductive\emph{)} Lie group, let $W$ \emph{(}resp. $V$\emph{)} be a finite dimensional real \emph{(}resp. complex\emph{)} vector space, and let $\rho\colon K \to \GL(W)$ \emph{(}resp. $\rho\colon G \to \GL(V)$\emph{)} be a continuous \emph{(}resp.\ analytic\emph{)} representation of $K$ \emph{(}resp. $G$\emph{)}. Then, the invariant algebra $\mathbb{R}[W]^{K}$ \emph{(}resp. $\mathbb{C}[V]^{G}$\emph{)} is finitely generated. This means that there exists a finite set of invariant polynomials $J_{1},\dots ,J_{N}$ such that
  $\mathbb{R}[W]^{K}= \mathbb{R}[J_{1},\dotsc,J_{N}]$ \emph{(}resp. $\mathbb{C}[V]^{G}=\mathbb{C}[J_{1},\dotsc, J_{N}]$\emph{)}.
\end{thm}

\begin{rem}\label{rem:discussion}
  Although this theorem is stated most of the time for complex reductive groups (or even reductive groups over an algebraically closed field of characteristic zero), the proof in the compact case works in the same way since it relies only on the Noetherianity of $\mathbb{R}[W]$ and the existence of a Reynolds operator.
\end{rem}
Recall that an integral ring is normal if it is integrally closed in its fraction field. Using the preceding notation, we have the following result:
\begin{lem} \label{lem:normal}
The ring $\mathbb{R}[W]^K$ \emph{(}resp. $\mathbb{C}[V]^G$\emph{)} is a normal ring.
\end{lem}

\begin{proof}
  We have $\mathbb{R}[W]^K = \mathrm{Frac}(\mathbb{R}[W]^K) \cap \mathbb{R}[W]$. If an element $x$ in $\mathrm{Frac}(\mathbb{R}[W]^K)$ is integral over $\mathbb{R}[W]^K$, it is also integral over $\mathbb{R}[W]$ and since $\mathbb{R}[W]$ is normal, $x$ belongs to $\mathbb{R}[W]$, hence to $\mathbb{R}[W]^K$. The same proof works in the complex case since $\mathbb{C}[V]$ is normal.
\end{proof}

It is clear that any invariant is constant on $G$-orbits. The geometry of orbits can be understood via the invariants, but the situation is different for the real and the complex cases.

\textbf{Real case $(K, W)$:}

-- The $K$-orbits are compact, smooth, and are algebraic subsets of $W$.

-- The invariants separate the $K$-orbits. In other terms, given two vectors $w_{1}, w_{2} \in W$ belonging to different $K$-orbits, it is always possible to find a function $J \in \RR[W]^{K}$ such that $J(w_{1})\neq J(w_{2})$ (see~\cite[Appendix C]{AS1983}).

-- The orbit space $W/K$ can be described as a semialgebraic subset $S$ of $\RR^N$. Indeed, if $\set{J_{1},\dotsc, J_{N}}$ denotes a generating set for $\RR[W]^{K}$, then the mapping
\begin{equation*}
  p \colon w \mapsto \big(J_{1}(w), J_{2}(w), \dotsc, J_{N}(w)\big)
\end{equation*}
induces an homeomorphism between $W/K$ and $p(W)\subset \RR^{N}$ which is a semialgebraic subset of $\RR^N$ that can be described explicitly using the Gram matrix (we refer the reader to \cite{PS1985} for more details).

\textbf{Complex case $(G, V)$:}

The properties below are classical in geometric invariant theory, we refer the reader for instance to \cite[Part I \S 6]{LePotier_1997}.

-- The $G$-orbits are locally closed.

-- Two Zariski-closed $G$-stable sets of $V$ can be separated by invariants.

-- Each $G$-orbit is adherent to a unique closed $G$-orbit.

-- The complex scheme $V//G:=\mathrm{spec}\, (\CC[V]^{G})$ parameterizes closed $G$-orbits.

-- The map $q \colon V \rightarrow V // G$ maps $G$-invariant Zariski closed sets to Zariski closed sets.

\subsection{Luna's slice theorem}
\label{subsec:algebraic-case}

Let $G$ be a complex reductive group acting algebraically on a complex affine algebraic variety (the theory works for any closed field of characteristic zero, but we will use it essentially in the complex case). The aim of this section is to give a quick introduction to Luna's slice theorem, that produces a local model for the action near a closed orbit of $G$. This theorem is the algebraic counterpart of the existence of slices for proper actions of Lie groups on manifolds.

The most simple example happens when the action is free. If we pursue the analogy with differential geometry, we would expect a local trivialization near the orbit. However, the following example shows that it is not possible to expect such a result in the Zariski topology.

\begin{exmp}
  Let us consider the group of $n$-th roots of unity, acting naturally on $\CC^{*}$. In this case, the space of orbits is also isomorphic by $\CC^{*}$, the quotient map $\pi$ being given by $z \mapsto z^n$. We see that it is impossible to trivialize $\pi$ in the Zariski topology on the base. Indeed, a Zariski open subset of $\CC^{*}$ is obtained by removing a finite number of points, and the projection remains nontrivial on any such open subset.
\end{exmp}

The problem comes from the fact that a smooth and surjective morphism between algebraic varieties does not always have a section in the Zariski topology. However, it has a section in the etale topology~\cite[17.16.3 (ii)]{Gro1967}. Concretely this means the following: if $\varphi \colon X \rightarrow Y$ is smooth and surjective, then for any $y$ in $Y$ there is a neighborhood $U_{y}$ of $y$ and an etale morphism $\varphi \colon V \rightarrow U_{y}$ from an algebraic variety $V$ to $U_y$ such that the pull-back morphism $\widetilde{f} \colon V \times_{U_{y}} X \rightarrow V$ has a section. For the interested reader, let us mention that the problem of trivializing principal $G$-bundles in the Zariski topology was studied in depth by Grothendieck in~\cite{Gro1958}: in fact he proved that for a given algebraic group $G$, then all etale locally trivial principal $G$-bundles are Zariski locally trivial if and only if $G$ is affine, connected, and without torsion~\cite[Theorem 3]{Gro1958}.

For any affine complex variety $X=\mathrm{Spec}\,A$ ($A$ being a commutative complex algebra of finite type) endowed with an action of a reductive group $G$, we denote by $X \,/\!/\,G$ the categorical quotient $\mathrm{Spec}\,A^G$, the points of $X \,/\!/\,G$ parametrize closed $G$-orbits of $X$. We say that a subset $S$ of $X$ is $G$-saturated if it is saturated with respect to the quotient map $X \rightarrow X \,/\!/\,G$. This means that any point in $X$ whose $G$-orbit is adherent to a point of $S$ lies in $S$.

\begin{thm}[Luna's slice theorem \cite{Lun1973}] \label{thm:lunaslice}
  Let $G$ be a complex reductive group acting on a complex affine algebraic variety $X$. Let $x$ be a point of $X$ and assume that the orbit $G \cdot x$ is closed in $X$. Then there exists an \textit{etale slice} at $x$, that is an affine subvariety $V$ of $X$ passing through $x$ satisfying the following properties:
  \begin{enumerate}
    \item[--] $V$ is $G_{x}$-stable
    \item[--] The natural $G$-morphism $\varphi \colon G \times_{G_{x}} V \rightarrow X$ is etale, and its image $U$ is an affine and $G$-saturated neighborhood of the orbit $G \cdot x$.
    \item[--] The natural map $\overline{\varphi} \colon V \,/\!/\, G_x \rightarrow U \,/\!/\, G$ is etale.
    \item[--] The diagram
          \[
            \xymatrix{G \times_{G_x} V \ar[r]^-{\varphi} \ar[d]^-{\sigma} & U \ar[d]^{\pi} \\ V  \,/\!/\, G_x \ar[r]^-{\overline{\varphi}}& U  \,/\!/\, G }
          \]
          is cartesian.
  \end{enumerate}
\end{thm}

\begin{cor} \label{cor:closed}
  With the notations of theorem \ref{thm:lunaslice}, if $v$ is in $V$ and if $G_x \cdot v$ is closed in $V$, then $G \cdot v$ is closed in $X$.
\end{cor}

\begin{proof}
  Assume that $G_x \cdot v$ is closed in $V$, and let $z$ in $U$ in the closure of $G \cdot v$. Then we have the diagram
  \[
    \xymatrix{
    & z \ar[d]^-{\pi} \\
    \sigma(v) \ar[r]^{\overline{\varphi}} & \pi(z)
    }
  \]
  which means that there exists $(g, v')$ in $G \times V$ such that $g.v'=z$ and $\sigma(v')=\sigma(v)$. Since $G_x \cdot v$ is closed, $v'$ lies in the $G_x$-orbit of $v$, so $v'$ lies in $G \cdot v$ and so does $z$.
\end{proof}

A corollary of the slice theorem is the algebraic version of corollary~\ref{cor:slices2}. Since complex algebraic groups have a finite number of connected components, we can use for them the containment relation without problem as we did for the compact case thanks to lemma \ref{lem:finitecc}.

\begin{cor}~\cite[Remark $4^{\circ}$ pp. 98]{Lun1973}\label{cor:yolo1}
  Let $G$ be a reductive group acting on an affine algebraic variety $X$. Let $x$ be a point of $X$ and assume that the orbit $G \cdot x$ is closed in $X$. Then there exists a $G$-invariant saturated Zariski open neighborhood $U$ of $x$ such that for any $y$ in $U$, $[G_{y}] \preceq [G_x]$. Besides, if for some $y$ in $U$ we have $[G_{y}]=[G_x]$, then $G \cdot y$ is closed.
\end{cor}

\begin{proof}
  The first point is straightforward since the stabilizers of points in $G \times_{G_x} V$ are subgroups of $G_x$. For the second point, if $[G_y]=[G_x]$, we can assume that $y$ lies in $V$ so $G_y=G_x$. It follows that $G_x \cdot y = \{y\}$, so by corollary \ref{cor:closed}, $G \cdot y$ is closed.
\end{proof}

The closedness of the orbit is crucial in Luna's slice theorem, as shown by the example below, due to Richardson:

\begin{exmp}\cite[Remark $4^{\circ}$ page 98]{Lun1973}\label{ex:sournois}
  Let us consider the natural action of $\SL(2;\CC)$ on the set $V_{3}$ of cubic binary forms, \textit{i.e.}, homogeneous complex polynomials of degree $3$ in two complex variables. There is a single invariant $\Delta$, which is the discriminant.

  \begin{table}[ht]
    \begin{tabular}{cccc}
      \toprule
      \textbf{Description}        & \textbf{Stabilizer}                  & $\Delta$ & \textbf{Type of orbit}     \\
      \midrule
      \textrm{three simple roots} & $\mathbb{Z}/3\mathbb{Z}$             & $\neq 0$ & \textrm{closed}            \\
      0                           & $\SL(2;\CC)$                         & $0$      & \textrm{closed}            \\
      \textrm{one double root}    & $\{0\} $                             & $0$      & \textrm{adherent to}\,\, 0 \\
      \textrm{one triple root}    & $\CC \rtimes \mathbb{Z}/3\mathbb{Z}$ & $0$      & \textrm{adherent to}\,\, 0 \\
      \bottomrule
    \end{tabular}
  \end{table}

  The set $U$ of binary forms which have three distinct roots is a Zariski open subset of $V_{3}$ defined by the nonvanishing of the discriminant. It is dense, and all points in $U$ have isotropy $\ZZ / 3 \ZZ$. However, any orbit of a cubic form with a double root has trivial isotropy. Hence the generic orbit has isotropy $\mathbb{Z}/3\mathbb{Z}$, but some other orbits have trivial stabilizer.
\end{exmp}

\subsection{Isotropy strata in the complex case}

Let $\rho \colon G \rightarrow \mathrm{GL}(V)$ be a finite-dimensional representation of a complex reductive group.
The definition of orbit types takes in consideration only the stabilizers of \textit{closed} orbits.
\begin{defn}
  The orbit types of $\rho$ is the set of conjugacy classes of stabilizers $[G_v]$ where $v$ runs through vectors of $V$ with closed orbit.
\end{defn}

Recall that thanks to lemma \ref{lem:finitecc}, the containment relation is still well-defined on conjugacy classes of closed algebraic subgroups of $G$. It follows from Luna's theorem and the standard induction argument that orbit types of $\rho$ is a finite partially ordered set.

For any $v$ in $V$, there is an associated isotropy class $[G_v^*]$ which is the conjugacy class of the isotropy group of any point in the unique closed orbit adherent to $G \cdot v$. Thanks to corollary \ref{cor:yolo1}, we always have $[G_v] \preceq [G_v^*]$.

\begin{prop}[Matsushima's criterion, cf {\cite[\S I.2]{Lun1973}}]
  If $[H]$ is an orbit type of $\rho$, then $H$ is reductive.
\end{prop}

We can again attach to $\rho$ a finite partial ordered set of orbit types, they consist of reductive subgroups of $G$. This partially ordered set is finite and has a unique minimal element.

\begin{rem}
  If $[H]$ is the minimal orbit type, the set of vectors $v$ in $V$ such that $[G_v^*]=[H]$ is a nonempty Zariski open set of $V$. It is even possible to prove that there exists a nonempty Zariski open subset of $V$ such that all stabilizers are conjugate, but this generic isotropy can be nonreductive. This result is due to Richardson (see \cite{Richardson1972}, \cite[corollary 8]{Lun1973}).
\end{rem}

\begin{defn}
  Let $[H]$ be an orbit type.

  -- The open stratum $\Omega_{[H]}$ is the set of vectors $v$ in $V$ such that $[G_v^*]=[H]$.

  -- The closed stratum ${\Lambda}_{[H]}$ is the set of vectors $v$ in $V$ such that $[H] \preceq [G_v^*]$.

  -- The open fixed locus $V^{\langle H \rangle}$ is $V^H \cap \Omega_{[H]}$.
\end{defn}

\begin{prop}\cite[lem. 5.5 (2)]{Sch1980} \label{prop:zdens}
  The open fixed locus $V^{\langle H \rangle}$ consists of all vectors in $V$ whose orbit is closed and whose stabilizer is $H$, it is a nonempty Zariski open subset of $V^H$.
\end{prop}

\begin{proof}
  Let $v$ in $V^{\langle H \rangle}$. Then $[H] \preceq [G_v] \preceq [G_v^*] = [H]$. Hence there is equality everywhere, so $G \cdot v$ is closed thanks to corollary \ref{cor:yolo1}, and $G_v=H$. The converse inclusion is obvious. To prove that $V^{\langle H \rangle}$ is Zariski open in $V^H$, let $v$ in $V^{\langle H \rangle}$. Then there exists a Zariski open subset $U$ of $V^H$ containing $v$ such that for any $x$ in $U$, $[G_x] \preceq [G_v]=[H]$. Hence, for $x$ in $U \cap V^H$, $[H] \preceq [G_x] \preceq [G_v]=[H]$ and we apply again corollary \ref{cor:yolo1} to deduce that $U \cap V^H$ is included in $V^{\langle H \rangle}$, whence the result.
\end{proof}

\begin{prop} \label{prop:lunaclosed}
  For any orbit type $[H]$ of $\rho$, ${\Lambda}_{[H]}$ is a Zariski closed subset of $V$.
\end{prop}

\begin{proof}
  If $v$ is not in ${\Lambda}_{[H]}$, let $G \cdot x$ be the unique closed orbit adherent to $G \cdot v$. Let $U$ be a $G$-invariant neighborhood of $G \cdot x$ given by corollary \ref{cor:yolo1}. Then $U$ is a Zariski neighborhood of $v$ and for any $y$ in $U$, $[G_y] \preceq [G_x]$. Then it is impossible to have $[H] \preceq [G_y]$ --otherwise we would have $[H] \preceq [G_x]=[G_v^*]$-- so $U$ is a Zariski neighborhood of $v$ in the complement of $\overline{\Sigma}_{[H]}$.
\end{proof}

Let $Z=V // G = \mathrm{spec}(\mathbb{C}[V]^G)$ and $q \colon V \rightarrow Z$ the natural projection. Then:

\begin{prop} \label{prop:direct}
  Given an isotropy type $[H]$ of $\rho$, let $Z_{[H]}=q(\Omega_{[H]})$.
  \begin{enumerate}
    \item[(i)] $Z_{[H]}=q(V^{\langle H \rangle})$.
    \item[(ii)] $\overline{Z}_{[H]}=q(V^H)=q(\Lambda_{[H]})= \displaystyle{\bigcup_{\substack{
            {[H'] \in \textrm{or}(\rho)} \\ [H] \preceq [H']}}Z_{[H']}}$.
    \item[(iii)] The $(Z_{[H]})_{[H] \in \mathrm{or}(\rho)}$.
          define a stratification of $Z$ by locally closed sets.
    \item[(iv)] $\Omega_{[H]}=q^{-1}(Z_{[H]})$.
    \item[(v)] $\Lambda_{[H]}=q^{-1}(\overline{Z}_{[H]})$.
  \end{enumerate}
\end{prop}

\begin{proof}
  There are obvious inclusions since $V^{\langle H \rangle}$ (resp. $V^H$) is included in $\Omega_{[H]}$ (resp. $\Lambda_{[H]})$. For the converse inclusions, if $v$ is in $\Omega_{[H]}$, by definition, there exists $v^*$ in $V$ such that $G \cdot v^*$ is closed, $q(v)=q(v^*)$, and $[G \cdot v^*]=[H]$. This means that $G_v^*$ equals $gHg^{-1}$ so $G_{g.v*}=H$ which implies that $g.v^*$ belongs to $V^{\langle H \rangle}$. This gives (i).

  For (ii), we are in the same situation except that $G_v^*$ contains $gHg^{-1}$, so $g.v^*$  belongs to $V^H$. This gives the inclusion of $q(\Lambda_{[H]})$ in $q(V^H)$. The last queality of (ii) is obvious, it remains to prove the first one. Since $\Lambda_{[H]}$ is Zariski closed by proposition \ref{prop:lunaclosed} and contains $Z_{[H]}$, it also contains $\overline{Z}_{[H]}$. Conversely, $q(V^H)=q\left(\overline{V^{\langle H \rangle}}\right) \subset \overline{q(V^{\langle H \rangle})}=\overline{Z}_{[H]}$.

  For (iii) we write that $\overline{Z}_{[H]} \setminus Z_{[H]} = \displaystyle{\bigcup_{\substack{
    {[H'] \in \textrm{or}(\rho)} \\ [H] \preceq [H'] \\ [H'] \neq [H]}} \overline{Z}_{[H']}}$ is closed in $\overline{Z}_{[H]}$.

  Properties (iv) and (v) are straightforward.
\end{proof}

\begin{rem}
  There is another natural closed stratum attached to $[H]$, which is smaller than $\Lambda_{[H]}$ : it is the Zariski closure of points $v$ with closed orbits such that $[G_v]=[H]$. This set is $\overline{G . V^{\langle H \rangle}}$, which is also equal to $\overline{G . V^{H}}$. It has the advantage to be always irreducible, contrarily to $\Lambda_H$, as shown in the next example.
\end{rem}

\begin{exmp} \label{baby}
  Let us consider the representation of $\mathbb{C}^{\times}$ on $\mathbb{C}^2$ given by $\lambda . (z, w)=(\lambda z, \lambda^{-1} w)$. There are two complex orbit types : $[0]$ and $[\mathbb{C}^{\times}]$. All closed orbits apart $0$ are of the form $\{zw=c\}$ for $c \neq 0$, and there are two non-closed orbits which are $\mathbb{C}^{\times} \times \{0\}$ and $\{0\} \times \mathbb{C}^{\times}$. Then $(\mathbb{C}^2)^{\mathbb{C}^{\times}}=\{0\}$ but ${\Lambda}_{[\mathbb{C}^{\times}]}=\{zw=0\}$.
\end{exmp}

The main feature of the stratum $\overline{G.V^{H}}$ has been provided by Luna:

\begin{prop} \cite[\S 2, prop.]{Lun1975} \label{prop:normal}
  If $[H]$ is a complex orbit type, the natural morphism $V^H // N_G(H) \rightarrow \overline{G . V^{H}}//G $ identifies with the normalization of $\overline{G.V^{H}}//G$.
\end{prop}

We won't go into the details of the proof which is quite involved. Let us just highlight two points :

-- By lemma \ref{lem:normal}, $V^H // N_G(H)$ is a normal variety.

-- If we look set-theoretically, the morphism $V^{\langle H \rangle} / N_G(H) \rightarrow (G.V^{\langle H \rangle})/G$ is a bijection.

\section{Complexification}
\label{sec:complexification}

\subsection{Compact Lie groups}

The complexification of a real Lie group is formally defined as the solution of a universal problem, which always exists and is unique up to a complex analytic isomorphism \cite[Chapter 3]{BB1958}. If $K$ is compact, it is possible to define $K^{\CC}$ as follows \cite[III.8]{BD1985}: let $\mathcal{A}$ be the algebra of representative functions on $K$, that is functions that generate a finite-dimensional representation inside $\mathcal{C}^0(K, \RR)$. Tannaka-Krein duality~\cite[III.7]{BD1985} guarantees that $K$ identifies with real characters of $\mathcal{A}$, that is every character of $\mathcal{A}$ is of the form $f \rightarrow f(k)$ for $k$ in $K$. Then $K^{\CC}$ is defined as the complex characters of $\mathcal{A}^{\CC}$.

\begin{exmp}
  If $K=\UU(1)$, $\mathcal{A}$ is the algebra of trigonometric polynomials $\RR[\cos(\theta), \sin(\theta)]$. Its complexification is the algebra $\CC[\cos(\theta), \sin(\theta)]$. Given a complex trigonometric polynomial $P(\cos \theta, \sin \theta)$, we can associate the Laurent Polynomial $P\left(\frac{z+1/z}{2}, \frac{z-1/z}{2\mathbf{i}} \right)$. In this way we see that
  \begin{equation*}
    \CC[\cos(\theta), \sin(\theta)] \simeq \CC[z, 1/z],
  \end{equation*}
  since every Laurent polynomial is uniquely determined by its restriction on $\UU(1)$. Hence, the complex characters of $\CC[\cos(\theta), \sin(\theta)]$ are exactly the points of $\CC^{\times}=\mathrm{Spec}\, \mathbb{C}[z, 1/z]$.
\end{exmp}

There is an explicit way to describe this complexification using the polar decomposition, which is quite useful to understand more precisely the geometry of $K^{\CC}$. To achieve this, we use the fact that every compact Lie group admits a faithful representation~\cite[Thm. III.4.1]{BD1985}, and thanks to Weyl's unitary trick, it admits a faithful unitary representation.
Let $\mathrm{P}(n)$ be the set of hermitian positive definite matrices. The product map
\[
  \mathrm{U}(n) \times \mathrm{P}(n) \xrightarrow{\sim} \mathrm{GL}(n; \CC)
\]
is a diffeomorphism. If $\iota$ denotes the Cartan involution $M \rightarrow (M^*)^{-1}$ of $\mathrm{GL}(n; \CC)$, then for any $M$ in $\mathrm{GL}(m; \CC)$ with polar decomposition $kh$, $h^2=\iota(M)^{-1}M$.
The map $Z \rightarrow e^{\mathbf{i}Z}$ from $\mathfrak{u}(n)$ to $\mathrm{P}(n)$ is a diffeomorphism, where $\mathfrak{u}(n)=\mathrm{Lie}(\mathrm{U}(n))$ is the Lie algebra of skew-hermitian matrices.

\begin{prop}\cite[prop. III.8.3]{BD1985}\label{prop:Chevalley-complexification}
  Let $K\subset \mathrm{U}(n)$ be a compact Lie group and let $\mathfrak{k}$ be its Lie algebra. Then
  \begin{equation*}
    K^{\CC}=\set{ke^{\mathbf{i}Z}~;~ k\in K \text{ and } Z \in \mathfrak{k}}.
  \end{equation*}
  In particular, $K^{\CC}$ is diffeomorphic to $K \times \mathfrak{k}$.
\end{prop}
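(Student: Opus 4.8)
\textit{Sketch of the intended proof.} Write $K := \set{ge^{iZ};\; g \in G,\ Z \in \mathfrak{g}} \subseteq \GL(n,\CC)$. The plan is to establish, in this order, that (i) the map $(g,Z) \mapsto ge^{iZ}$ is a diffeomorphism of $G \times \mathfrak{g}$ onto $K$ and $K$ is a closed submanifold of $\GL(n,\CC)$; (ii) $K$ is a subgroup, hence a complex Lie subgroup with Lie algebra $\mathfrak{g}^{\CC} := \mathfrak{g}\oplus i\mathfrak{g}$; and (iii) the inclusion $G \hookrightarrow K$ realises the universal property characterising the complexification, so that $K \cong G^{\CC}$. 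Item (i) then also yields the final diffeomorphism statement.

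For (i): since $\mathfrak{g} \subseteq \mathfrak{u}(n)$, for $Z \in \mathfrak{g}$ the matrix $iZ$ is hermitian, so $e^{iZ} \in \mathrm{P}(n)$ (Lemma~\ref{lem:un-pn} gives that $Z \mapsto e^{iZ}$ is a diffeomorphism $\mathfrak{u}(n) \xrightarrow{\sim} \mathrm{P}(n)$); hence $ge^{iZ}$ \emph{is} the polar decomposition of the element it represents. Thus $(g,Z) \mapsto ge^{iZ}$ factors as the closed embedding $\mathrm{id}_G \times (Z \mapsto e^{iZ}) \colon G \times \mathfrak{g} \hookrightarrow \mathrm{U}(n) \times \mathrm{P}(n)$ — using that $\mathfrak{g}$ is a closed linear subspace of $\mathfrak{u}(n)$ and that $Z\mapsto e^{iZ}$ is a diffeomorphism onto its image — followed by the polar-decomposition diffeomorphism $\mathrm{U}(n) \times \mathrm{P}(n) \xrightarrow{\sim} \GL(n,\CC)$ of Lemma~\ref{lem:un-pn}. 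So $K$ is a closed submanifold diffeomorphic to $G \times \mathfrak{g}$.

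For (ii): closure under inversion is immediate, since $(ge^{iZ})^{-1} = g^{-1}e^{-i\Ad(g)Z}$ and $\Ad(g)Z \in \mathfrak{g}$ as $\mathfrak{g}$ is $\Ad(G)$-invariant, so $K = K^{-1}$. For products, the identity $g^{-1}e^{iZ}g = e^{i\Ad(g^{-1})Z}$ reduces matters to proving $e^{iZ}e^{iW} \in K$ for $Z,W \in \mathfrak{g}$. For this I would introduce $\mathfrak{h} := \mathfrak{g}\oplus i\mathfrak{g}$, which is a Lie subalgebra of $\mathfrak{gl}(n,\CC)$, in fact a complex one since it is stable under multiplication by $i$, and the connected Lie subgroup $H \subseteq \GL(n,\CC)$ it generates. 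As $\Ad(G)\mathfrak{h} = \mathfrak{h}$, the group $G$ normalises $H$, so $GH$ is a subgroup of $\GL(n,\CC)$ containing $K$; and a real-dimension count $\dim_{\RR} GH = \dim_{\RR}\mathfrak{h} = 2\dim_{\RR}\mathfrak{g} = \dim_{\RR} K$ (the middle equality using $\mathfrak{u}(n) \cap i\mathfrak{u}(n) = 0$), combined with $K$ being closed and containing the identity (so that $K$ is a union of connected components of $GH$), forces $H \subseteq K$ and therefore $K = GH$. Hence $K$ is a real Lie subgroup whose Lie algebra $\mathfrak{h}$ is a complex subspace of $\mathfrak{gl}(n,\CC)$, so $K$ is a complex Lie subgroup.

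For (iii): I would verify that every continuous homomorphism $\varphi\colon G \to L$ into a complex Lie group extends uniquely to a holomorphic homomorphism $K \to L$. Reducing to $L = \GL(m,\CC)$ via a faithful representation of $L$ and then to irreducible summands, the extension is obtained by integrating over $K$ the $\CC$-linear extension of $d\varphi$ to $\mathfrak{g}^{\CC} = \mathrm{Lie}(K)$; uniqueness holds because $K$ is generated by $G$ together with $\exp(i\mathfrak{g})$, on which any two extensions must agree. Since the complexification is unique up to isomorphism, this yields a canonical isomorphism $K \cong G^{\CC}$ under which $G \hookrightarrow K$ corresponds to $G \hookrightarrow G^{\CC}$, and (i) then identifies $G^{\CC}$ with $G \times \mathfrak{g}$ as a manifold. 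The step I expect to be the real obstacle is (ii): showing $\exp(i\mathfrak{g})\cdot\exp(i\mathfrak{g}) \subseteq G\exp(i\mathfrak{g})$ is exactly where one must use that $\mathfrak{g}$ is a Lie subalgebra and not merely a linear subspace, and it is the closedness of $K$ from (i) that makes the dimension count decisive; step (iii), by contrast, is routine once one notes that $K$ is reductive (Weyl's unitary trick, $G$ being Zariski-dense in $K$) and invokes the standard theory of representative functions.
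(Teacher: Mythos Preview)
Your steps (i) and (ii) are correct, and your approach to (ii) is genuinely different from the paper's. The paper (in the Remark following the proposition) shows closure of $K$ under multiplication by a direct analytic argument: after the same reduction to $e^{iZ_2}e^{iZ_3}$, it computes the positive part via $e^{2iZ}=e^{iZ_3}e^{2iZ_2}e^{iZ_3}$, observes that the real-analytic map $(Z_2,Z_3)\mapsto \frac{1}{2i}\log(e^{iZ_3}e^{2iZ_2}e^{iZ_3})$ lands in $\mathfrak{g}^{\CC}\cap\mathfrak{u}(n)=\mathfrak{g}$ for small $Z_2,Z_3$ by Baker--Campbell--Hausdorff, and then invokes analytic continuation (corollary~\ref{cor:analytique2}) to get it globally; the unitary part is handled by the same trick. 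Your route instead manufactures the connected integral subgroup $H$ with Lie algebra $\mathfrak{g}\oplus i\mathfrak{g}$, uses that $G$ normalises $H$ to get the abstract group $GH\supset K$, and then pins down $K=GH$ by the dimension count plus the closedness from (i). Your argument is more structural and avoids any analytic-continuation machinery; the paper's argument is more hands-on but yields the same BCH/analyticity template that it reuses repeatedly later (e.g.\ in lemma~\ref{lem:A-B} and proposition~\ref{prop:NH-complexification}). One small point worth making explicit in your write-up: the equality of dimensions gives that $K$ is \emph{open} in $GH$ (as an immersed submanifold), and closedness in $\GL(n,\CC)$ then gives closedness in $GH$; from there, $K\supset (GH)^e=H$ and $K\supset G$ force $K\supset GH$.

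Step (iii), however, has a genuine gap: you propose to reduce to $L=\GL(m,\CC)$ ``via a faithful representation of $L$'', but an arbitrary complex Lie group need not admit a faithful finite-dimensional holomorphic representation, so this reduction is illegitimate. The paper handles the universal property (proposition~\ref{defn:universal-property}) without any such reduction: it simply \emph{defines} $F(ge^{iZ}):=f(g)\,e^{i\,d\!f(Z)}$ using the polar coordinates from (i), checks via BCH and Lie's third theorem that $F$ agrees near the identity with the local integration of $(d\!f)^{\CC}$ (hence is holomorphic there), propagates holomorphy by left translation, and finally uses Zariski-density of $G$ in $K$ (corollary~\ref{cor:totrel}) to upgrade ``$F|_G$ is a group morphism'' to ``$F$ is a group morphism''. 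This direct construction is both simpler and strictly more general than your proposed reduction; I would replace your (iii) with it.
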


\begin{rem}
  This result might be surprising at first glance because it is not clear at all that the right hand side is a group. Let us briefly explain by hand why it is the case. Given $k_{1}e^{\mathbf{i}Z_{1}}$ and $k_{2}e^{\mathbf{i}Z_{2}}$, where $k_{1},k_{2}\in K$ and $Z_{1},Z_{2}\in \mathfrak{k}$, we have
  \begin{equation*}
    e^{\mathbf{i}Z_{1}} k_{2} =k_{2} \left(k_{2}^{-1}e^{\mathbf{i}Z_{1}}k_2 \right) = k_{2} e^{\mathbf{i} \mathrm{Ad}(k_{2}^{-1})(Z_{1})}=k_{2} e^{\mathbf{i}Z_3},
  \end{equation*}
  where $Z_3 \in \mathfrak{k}$. Thus we get $k_{1} e^{\mathbf{i}Z_{1}} k_{2} e^{\mathbf{i}Z_{2}}=k_{1}k_{2} e^{\mathbf{i}Z_{2}}e^{\mathbf{i}Z_3}$.
  Writing $e^{\mathbf{i}Z_{2}}e^{\mathbf{i}Z_3}=ke^{\mathbf{i}Z}$  with $k \in \mathrm{U}(n)$ and $Z \in \mathfrak{u}(n)$, we have
  \begin{equation*}
    e^{2\mathbf{i}Z}=\iota(e^{\mathbf{i}Z_{2}}e^{\mathbf{i}Z_3})^{-1} e^{\mathbf{i}Z_{2}}e^{\mathbf{i}Z_3}=e^{\mathbf{i}Z_3} e^{2\mathbf{i}Z_{2}} e^{\mathbf{i}Z_3}.
  \end{equation*}
  Consider now the real analytic function
  \begin{equation*}
    \varphi \colon \mathfrak{k} \times \mathfrak{k} \to \mathfrak{u}(n), \qquad (Z_{2},Z_3) \mapsto \frac{1}{2\mathbf{i}} \log(e^{\mathbf{i}Z_3}e^{2\mathbf{i}Z_{2}}e^{\mathbf{i}Z_3}).
  \end{equation*}
  By applying the Baker-Campbell-Hausdorff formula two times, we get that $\varphi(Z_{2},Z_3)\in \mathfrak{k}^{\CC}$ for sufficiently small $Z_{2},Z_3$, and by real-analytic continuation, we conclude that for all $Z_{2},Z_3$ in $\mathfrak{k}$, $\varphi(Z_{2},Z_3)\in \mathfrak{k}^{\CC}\cap \mathfrak{u}(n)=\mathfrak{k}$. This proves that for all $Z_{2}, Z_3$ in $\mathfrak{k}$, $Z$ is in $\mathfrak{k}$.
  We can apply the same trick a second time and consider the real analytic map
  \begin{align*}
    \Psi \colon \mathfrak{k} \times \mathfrak{k} & \to \mathrm{U}(n)                                                               \\
    (Z_{2},Z_3)                                  & \mapsto e^{\mathbf{i}Z_{2}}e^{\mathbf{i}Z_3}e^{-\mathbf{i}\varphi(Z_{2}, Z_3)}.
  \end{align*}
  For $Z_{2}, Z_3$ close to $0$, $\log \Psi$ takes values in $\mathfrak{k}^{\CC} \cap \mathfrak{u}(n)=\mathfrak{k}$, so $\Psi$ takes values in $K$. As $K$ is a closed real analytic submanifold of $\mathrm{U}(n)$ we deduce using again  real-analytic continuation that for all $Z_{2}, Z_3$ in $\mathfrak{k}$, $\Psi(Z_{2}, Z_3) \in K$. Since $\Psi(Z_{2}, Z_3)=k$, we conclude that $k$ belongs to $K$ and we are done.
\end{rem}

\begin{rem}
  The group $K$ is a closed subgroup of $\mathrm{GL}(n; \CC)$, and it is straightforward to check that $\mathrm{Lie}(K^{\CC})=\mathfrak{k}+\mathbf{i}\mathfrak{k}=\mathfrak{k}^{\CC}$ since $\mathfrak{u}(n)$ is totally real in $\mathfrak{gl}(n; \CC)$. Hence $\mathrm{Lie}(K^{\CC})$ carries a natural complex structure for which the bracket is complex linear. It defines on $K^{\CC}$ a complex Lie group structure for which the exponential map is a local biholomorphism around $0$. Besides, $K$ is totally real in $K^{\CC}$.
\end{rem}

\begin{prop} \cite[prop. III.8.2]{BD1985} \label{prop:algebraic}
  The complexified group $K^{\CC}$ is an affine algebraic subgroup of $\mathrm{GL}(n; \CC)$.
\end{prop}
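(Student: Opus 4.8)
The plan is to realize $G^{\CC}$ directly as a Zariski-closed subset of $\GL(n,\CC)$, by showing that the embedding provided by Proposition~\ref{prop:Chevalley-complexification} is a closed immersion of affine varieties, i.e.\ has surjective comorphism. Fix a faithful unitary representation $\rho\colon G\hookrightarrow \UU(n)\subseteq \GL(n,\CC)$, so that by Proposition~\ref{prop:Chevalley-complexification} one has $G^{\CC}=\set{ge^{iZ};\ g\in G,\ Z\in\mathfrak g}\subseteq\GL(n,\CC)$. Recall from the construction of the complexification that the coordinate ring of $G^{\CC}$ is the complexified algebra $\mathcal A^{\CC}$ of representative functions of $G$, and that the embedding $G^{\CC}\hookrightarrow\GL(n,\CC)$ is the morphism of affine varieties whose comorphism is the restriction homomorphism $\mathrm{res}\colon \CC[\GL(n,\CC)]\to\mathcal A^{\CC}$, $f\mapsto f|_{G}$ (this indeed lands in $\mathcal A^{\CC}$ because the coordinates $\rho_{ij}$ and $\det(\rho)^{-1}$ restrict to representative functions on $G$, the latter since $\det\rho$ takes unimodular values).

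The key step is the surjectivity of $\mathrm{res}$. Since $\CC[\GL(n,\CC)]=\CC[x_{ij},\det^{-1}]$, the image of $\mathrm{res}$ is the subalgebra of $\mathcal A^{\CC}$ generated by the functions $g\mapsto\rho_{ij}(g)$ together with $g\mapsto\det(\rho(g))^{-1}=\overline{\det\rho(g)}$; the adjugate formula $\rho(g)^{-1}=\mathrm{adj}(\rho(g))\det(\rho(g))^{-1}$ shows this subalgebra also contains the conjugate coordinates $g\mapsto\overline{\rho_{ij}(g)}=(\rho(g)^{-1})_{ji}$, hence all matrix coefficients of all mixed tensor powers $\rho^{\otimes a}\otimes(\rho^{*})^{\otimes b}$. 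Because $\rho$ is faithful, this family of matrix coefficients separates the points of $G$, contains the constants and is stable under complex conjugation, so by Stone--Weierstrass it is dense in $C(G)$; being a $G\times G$-submodule of the Peter--Weyl decomposition $\mathcal A^{\CC}=\bigoplus_{\pi\in\widehat G}\pi^{*}\otimes\pi$ into pairwise inequivalent isotypic pieces, a dense such submodule must be all of $\mathcal A^{\CC}$. Thus $\mathrm{res}$ is onto, the embedding $G^{\CC}\hookrightarrow\GL(n,\CC)$ is a closed immersion, and therefore $G^{\CC}$ is a Zariski-closed — hence affine algebraic — subvariety of $\GL(n,\CC)$.

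It is worth recording, for use in \autoref{sec:proof-main-theorem}, that $G^{\CC}$ coincides with the Zariski closure $\overline G$ of $G$ in $\GL(n,\CC)$. The inclusion $G^{\CC}\subseteq\overline G$ follows by analytic continuation: for $f\in\CC[\GL(n,\CC)]$ vanishing on $G$ and for $g\in G$, $Z\in\mathfrak g$, the function $\zeta\mapsto f(g\,e^{\zeta Z})$ is entire on $\CC$ (matrix exponential and $\det$ are entire and non-vanishing) and vanishes for $\zeta\in\RR$, hence identically; evaluating at $\zeta=i$ gives $f(ge^{iZ})=0$. The reverse inclusion is then immediate, since $G$ is Zariski-dense in $G^{\CC}$ and $G^{\CC}$ has just been shown to be Zariski-closed, so $\overline G=\overline{G^{\CC}}^{\,\mathrm{Zar}}=G^{\CC}$. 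In particular, the Zariski closure $\overline G$ of the compact group $G$ is a complex algebraic subgroup of $\GL(n,\CC)$ containing the Zariski-dense compact subgroup $G$, hence is reductive by Weyl's unitary trick.

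The main obstacle is the surjectivity of $\mathrm{res}$, together with the (essentially bookkeeping) identification of the abstractly-defined coordinate ring $\mathcal A^{\CC}$ of $G^{\CC}$, and of the abstract embedding $\operatorname{Spec}\mathcal A^{\CC}\to\GL(n,\CC)$, with the concrete realization of Proposition~\ref{prop:Chevalley-complexification}. The Peter--Weyl and ``every irreducible representation occurs in tensor powers of a faithful representation'' inputs are classical, but have to be invoked carefully to guarantee that the image of $\mathrm{res}$ omits no isotypic component.
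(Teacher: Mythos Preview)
Your proof is correct and follows essentially the same route as the paper, which simply cites \cite[Prop.~III.8.2]{BD1985} and records that the key point is the finite generation of the algebra of representative functions; you have unpacked that citation, since surjectivity of $\mathrm{res}\colon\CC[\GL(n,\CC)]\to\mathcal A^{\CC}$ is precisely the statement that $\mathcal A^{\CC}$ is generated by the matrix coefficients of a faithful representation together with $\det(\rho)^{-1}$, and your Peter--Weyl/Stone--Weierstrass argument is the standard proof of this. Your extra paragraph identifying $G^{\CC}$ with the Zariski closure of $G$ in $\GL(n,\CC)$ is also correct and gives a pleasant analytic-continuation alternative to the paper's Corollary~\ref{cor:totrel}, which obtains the same conclusion from the total reality of $G$ in $G^{\CC}$.
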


\begin{cor}\label{cor:totrel}
  If $K$ is a compact Lie group, then $K$ is analytically Zariski dense in $K^{\CC}$.
\end{cor}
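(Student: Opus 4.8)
The plan is to deduce this from the fact, noted in the remark after Proposition~\ref{prop:Chevalley-complexification}, that $G$ sits inside $G^{\CC}$ as a totally real real-analytic submanifold of maximal dimension, combined with the identity principle for holomorphic functions. Concretely: by Proposition~\ref{prop:algebraic} the group $G^{\CC}$ is a smooth affine algebraic variety, so every regular function on $G^{\CC}$ is in particular holomorphic; hence it suffices to prove that any holomorphic function on $G^{\CC}$ vanishing on $G$ vanishes identically, which is strictly stronger than Zariski density. To set this up I would first check that $\dim_{\RR}G = \dim_{\RR}\mathfrak{g} = \dim_{\CC}\mathfrak{g}^{\CC} = \dim_{\CC}G^{\CC}$ and that $\mathfrak{g} = T_{e}G$ is totally real in $\mathfrak{g}^{\CC} = T_{e}G^{\CC}$ (because $\mathfrak{u}(n)$ is totally real in $\mathfrak{gl}(n,\CC)$); translating by elements of $G$, which act biholomorphically on the complex Lie group $G^{\CC}$, propagates total reality to every point of $G$, so $G$ is indeed maximal totally real, and it is real-analytic by corollary~\ref{cor:analytique3}.

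Next I would reduce to the connected case. Using the description $G^{\CC} = G\cdot\exp(i\mathfrak{g})$, the set $\exp(i\mathfrak{g})$ is connected (continuous image of $i\mathfrak{g}$) and contains $e$, so it lies in the identity component $G^{\CC}_{0}$; therefore, for each $g\in G$, the connected set $g\exp(i\mathfrak{g})$ lies in the connected component of $G^{\CC}$ through $g$. Since these sets cover $G^{\CC}$, every connected component of $G^{\CC}$ meets $G$ (in fact contains a connected component of $G$), and that intersection is a maximal totally real real-analytic submanifold of the component. So everything reduces to the following statement: if $X$ is a connected complex manifold, $M\subset X$ a maximal totally real real-analytic submanifold, and $f$ holomorphic on $X$ with $f|_{M}\equiv 0$, then $f\equiv 0$.

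This is the totally-real identity theorem recalled in Appendix~\ref{sec:real-analytic-functions}, and I would prove it by the standard local argument: near a point $p\in M$ one can choose holomorphic coordinates $(z_{1},\dots,z_{d})$ on $X$ centered at $p$ in which $M$ becomes the real slice $\{\mathrm{Im}\,z_{j}=0\}$ (this is exactly where real-analyticity and maximality of $M$ are used); the Taylor series of $f$ at $p$ then has real coefficients and vanishes on $\RR^{d}$, hence is identically zero, so $f$ vanishes near $p$, and the ordinary identity theorem on the connected manifold $X$ yields $f\equiv 0$. Running this over every connected component of $G^{\CC}$ gives the corollary: no nonzero regular (indeed no nonzero holomorphic) function on $G^{\CC}$ vanishes on $G$.

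The only delicate point is really the bookkeeping in the reduction step — making sure that each connected component of $G^{\CC}$ is actually hit by $G$ — since the identity theorem is blind to components; the rest is a routine application of Propositions~\ref{prop:Chevalley-complexification} and~\ref{prop:algebraic}. Alternatively, one can bypass the geometry entirely: the Tannaka--Krein construction identifies $\CC[G^{\CC}]$ with $\mathcal{A}^{\CC} = \mathcal{A}\otimes_{\RR}\CC$ realised as $\CC$-valued functions on $G$, so a regular function on $G^{\CC}$ vanishing on $G$ is literally the zero element of $\CC[G^{\CC}]$; but this uses the identification $\CC[G^{\CC}]=\mathcal{A}^{\CC}$ underlying Proposition~\ref{prop:algebraic}, whereas the totally-real argument is self-contained and gives the stronger analytic statement.
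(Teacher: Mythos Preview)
Your argument is correct and follows essentially the same route as the paper: the paper invokes the appendix result (Corollary~\ref{cor:Zariskiclosure}) that a maximal totally real real-analytic submanifold is analytically Zariski dense in each connected component it meets, and then uses the polar decomposition $G^{\CC}\simeq G\times\mathfrak{g}$ from Proposition~\ref{prop:Chevalley-complexification} to see that every component of $G^{\CC}$ meets $G$. You simply inline the proof of Corollary~\ref{cor:Zariskiclosure} (the local holomorphic chart straightening $M$ to $\RR^d\subset\CC^d$, then the identity principle) rather than citing it, and your reduction to components via $G^{\CC}=G\cdot\exp(i\mathfrak{g})$ is exactly the paper's use of Proposition~\ref{prop:Chevalley-complexification}; the Tannaka--Krein alternative you sketch at the end is a genuinely different (and shorter) route, but your main argument coincides with the paper's.
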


\begin{proof}
  The group $K$ is a totally real analytic submanifold of $K^{\CC}$. Hence the Zariski closure of $K$ in $K^{\CC}$ is the union of connected components of $K^{\CC}$ that intersect $K$. By Proposition~\ref{prop:Chevalley-complexification}, this union is $K^{\CC}$ itself.
\end{proof}

\begin{cor}\label{cor:cx}
  Let $K$ be a compact Lie group, and assume that $K^{\CC}$ acts holomorphically on a (non-necessarily finite dimensional) complex vector space $V$. Then $V^{K^{\CC}}=V^{K}$.
\end{cor}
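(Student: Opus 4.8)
The plan is to establish the nontrivial inclusion $W^{G}\subseteq W^{G^{\CC}}$, the reverse one being immediate from $G\subseteq G^{\CC}$. Fix $w\in W^{G}$. Using the concrete model of Proposition~\ref{prop:Chevalley-complexification}, where $G\subseteq\UU(n)$ and $G^{\CC}=\set{g\,e^{iZ}~;~g\in G,\ Z\in\mathfrak{g}}\subseteq\GL(n,\CC)$, I observe that for any such element $g\,e^{iZ}$ one has $g\,e^{iZ}\cdot w=g\cdot(e^{iZ}\cdot w)$, which equals $g\cdot w=w$ as soon as $e^{iZ}\cdot w=w$. So it suffices to prove that $e^{iZ}\cdot w=w$ for every $Z\in\mathfrak{g}$. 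This reduces the statement to a one complex variable question, which I would settle by the identity principle for holomorphic functions.

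For a fixed $Z\in\mathfrak{g}$, consider the curve $\zeta\mapsto e^{\zeta Z}$ from $\CC$ to $\GL(n,\CC)$. Writing $\zeta=a+ib$ and using that $aZ$ and $i(bZ)$ commute, one gets $e^{\zeta Z}=e^{aZ}e^{i(bZ)}$ with $e^{aZ}\in G$ and $e^{i(bZ)}\in G^{\CC}$ by Proposition~\ref{prop:Chevalley-complexification}, so the curve takes its values in $G^{\CC}$; moreover it is entire, being a convergent power series in $\zeta$, hence holomorphic as a map $\CC\to G^{\CC}$. Composing it with the orbit map $g\mapsto g\cdot w$, which is holomorphic by the hypothesis on the action, I obtain a holomorphic map $h\colon\CC\to W$, $h(\zeta)=e^{\zeta Z}\cdot w$. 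For real $\zeta=t$ one has $e^{tZ}\in G$ because $Z\in\mathfrak{g}=\mathrm{Lie}(G)$, hence $h(t)=w$; thus $h$ is constant equal to $w$ on the real axis. By the identity theorem and connectedness of $\CC$, $h$ is constant equal to $w$ on all of $\CC$; in particular $h(i)=e^{iZ}\cdot w=w$, which is exactly what was needed. Note that no connectedness assumption on $G$ is required, since in the decomposition $g\,e^{iZ}$ the factor $g$ ranges over all of $G$ and $g\cdot w=w$ already because $w\in W^{G}$.

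The one point deserving care — and what I regard as the main, though minor, obstacle — is the infinite-dimensionality of $W$: one must make sense of ``holomorphic map $h\colon\CC\to W$'' and of the identity principle in that setting. I would handle this by composing $h$ with a family of linear functionals separating the points of $W$ (the algebraic dual already separates points, and for the locally finite holomorphic actions that actually occur one may even use continuous functionals), so that each $\lambda\circ h\colon\CC\to\CC$ is an ordinary holomorphic function constant on $\RR$, hence constant; since the $\lambda$'s separate points, $h$ itself is constant. Equivalently, one reduces at the outset to a finite-dimensional $G^{\CC}$-invariant subspace of $W$ containing $w$ and argues there. As an alternative, in the spirit of the proof of Corollary~\ref{cor:totrel}, one could instead invoke directly that $G$ is totally real of full dimension in $G^{\CC}$ and analytically dense, so that the holomorphic orbit map of $w$, being constant on $G$, is constant on every connected component of $G^{\CC}$ meeting $G$, that is, on all of $G^{\CC}$.
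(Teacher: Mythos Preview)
Your proof is correct. The paper takes essentially the alternative you sketch at the end: it fixes $w\in W^{G}$, composes the orbit map $g\mapsto g\cdot w$ with an arbitrary linear form $u$ on $W$ to obtain a holomorphic function $\phi\colon G^{\CC}\to\CC$, $\phi(g)=u(g\cdot w-w)$, observes that $\phi$ vanishes on $G$, and invokes Corollary~\ref{cor:totrel} (analytic Zariski density of $G$ in $G^{\CC}$) to conclude that $\phi\equiv 0$; since linear forms separate points, $g\cdot w=w$ for all $g\in G^{\CC}$.

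Your main argument follows a genuinely different, though closely related, route: rather than using the global density of $G$ in $G^{\CC}$, you exploit the explicit polar decomposition of Proposition~\ref{prop:Chevalley-complexification} to reduce to the one-parameter family $\zeta\mapsto e^{\zeta Z}\cdot w$, and then apply the one-variable identity theorem along $\RR\subset\CC$. This is more elementary in that it avoids Corollary~\ref{cor:totrel} and the machinery of totally real submanifolds, using only scalar-valued analytic continuation; the price is the extra reduction step through the polar decomposition. Both approaches handle the infinite-dimensionality of $W$ the same way, by composing with separating linear functionals.
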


\begin{proof}
  The inclusion of $V^{K^{\CC}}$ in $V^K$ is obvious. In the other direction, for $v$ in $V^{K}$, let $u$ be a linear form on $V$ and let  $\phi \colon K^{\CC} \rightarrow \CC$ given by $\phi(k)=u(k \cdot v-v)$. Then $\phi$ is holomorphic and $\phi$ vanishes on $K$. Thanks to corollary~\ref{cor:totrel}, $\phi$ vanishes on $K^{\CC}$. This means that for all $k$ in $K^{\CC}$, all linear forms take the same values on $v$ and $k \cdot v$, so $k \cdot v = v$. Hence $v$ is fixed by $K^{\CC}$.
\end{proof}

Let us check that $K^{\CC}$ satisfies the universal property of complexification (which is more general than~\cite[prop.  III.8.6]{BD1985} which concerns only linear representations).

\begin{prop}[{\cite[\S 5.3, thm 3]{Ser1993}}]\label{def:universal-property}
  Let $K$ be a compact Lie group, let $K^{\CC}$ its complexification, and let $\phi \colon K \rightarrow K^{\CC}$ be the natural injection. For any complex Lie group $H$ and any morphism $f \colon K \rightarrow H$ of Lie groups, there exists a unique homomorphism $F:K^{\CC} \to H$ of complex Lie groups such that $f = F \circ \phi$.
  \begin{equation*}
    \xymatrix{
    K \ar[r]^-{f} \ar[d]_-{\phi}   & H \\
    K^{\CC} \ar[ur]_-{F} }
  \end{equation*}
\end{prop}

\begin{proof}
  We use the notation of proposition~\ref{prop:Chevalley-complexification}. Let $\sigma$ denotes the differential of $f$ at the origin and let $\sigma^{\CC} \colon \mathfrak{k}^{\CC} \rightarrow \mathfrak{h}$ be its complexification. We define a map $F \colon K^{\CC} \rightarrow H$ by $F(ke^{iZ})=f(k)e^{i\sigma(Z)}$. First we claim that $F$ is holomorphic. By Lie's third theorem (see~\cite[\S II.8, Thm 1]{Ser2006}), the morphism $\sigma^{\CC}$ can be uniquely integrated to a local holomorphic group morphism $\widetilde{F} \colon K^{\CC} \dashrightarrow H$ defined in a neighborhood of the identity, whose differential at the identity element is $\sigma^{\CC}$. Hence, $\widetilde{F}_{|K}$ integrates $\sigma$, so $\widetilde{F}_{|K}=f$. Hence, for $k$ close to the identity and $Z$ close to $0$, we get
  \begin{equation*}
    \widetilde{F}(ke^{iZ})=\widetilde{F}(k) \widetilde{F}(e^{iZ})=f(k)e^{\sigma^{\CC}(iZ)}=f(k) e^{i\sigma(Z)}=F(ke^{iZ}),
  \end{equation*}
  so $F=\widetilde{F}$ near the identity. It follows that $F$ is holomorphic near the identity. Since $F$ is real analytic, $F$ is holomorphic on $(K^{\CC})^e$. Since for $k$ in $K$ and $\ell$ in $K^{\CC}$, $F(k\ell)=f(k)F(\ell)$, it follows that $F$ is holomorphic on $K^{\CC}$. Lastly, $F_{|K}=f$ is a group morphism on $K$, so by corollary~\ref{cor:totrel}, applied two times, $F$ is a group morphism.
\end{proof}

\begin{prop} \label{prop:vaudaine}
  Let $K$ be a compact Lie group, let $G=K^{\CC}$, and
  let $L_1$ and $L_2$ be two closed subgroups of $K$. If $[L_1^{\CC}] \preceq [L_2^{\CC}]$, then $[L_1] \preceq [L_2]$.
\end{prop}

\begin{proof}
  By hypothesis, there is $g$ in $G$ such that $gL_1^{\CC}g^{-1} \subset L_2^{\CC}$. The group $gL_1g^{-1}$ is a compact subgroup of $L_2^{\CC}$, so it is included in a maximal compact subgroup of $L_2^{\CC}$. Since these maximal compact subgroups are all conjugate, we can assume that $gL_1g^{-1}$ is included in $L_2$. For any $\ell_1$ in $L_1$ let $\ell_2=g \ell_1 g^{-1}$, and write $g^{-1}=ke^{\mathbf{i}Z}$ where $Z$ is in $\mathfrak{k}$. Then
  $e^{-\mathbf{i}Z}k^{-1} \ell_1 k e^{\mathbf{i}Z}=\ell_2$, so
  if we put $k_1=k^{-1} \ell_1 k$, then $k_1 e^{\mathbf{i}Z} k_1^{-1}=e^{\mathbf{i}Z} \ell_2k_1^{-1}$ which gives $e^{\mathbf{i} \mathrm{Ad}(k_1)(Z)}=e^{\mathbf{i}Z} (\ell_2k_1^{-1})$.
  By unicity of the polar decomposition, $\ell_2=k_1$ and $\mathrm{Ad}(k_1)(Z)=0$. Hence $k^{-1}\ell_1k=\ell_2$, so $k^{-1}L_1k \subset L_2$, which means that $[L_1] \preceq [L_2]$.
\end{proof}

\subsection{Real representations of compact Lie groups}
\label{sec:complexrep}

Let $K$ be a compact Lie group and $G$ be the complexification of $K$. If $\rho: K \to \GL(W)$ is a real continuous representation of a compact Lie group $K$, by proposition~\ref{def:universal-property}, there exists a unique analytic extension.
\begin{equation*}
  \rho^{\CC}: G \to \GL(V)
\end{equation*}
of $\rho$, where $V=W^{\mathbb{C}}$.

\begin{prop}  \cite[prop. III.8.6]{BD1985}, \cite[\S 5.4, cor. 1]{Ser1993}
  If we see $G$ as an affine algebraic group (via proposition \ref{prop:algebraic}), then $\rho^{\CC}$ is an algebraic map.
\end{prop}

\begin{prop} \cite[prop. 5.7]{Sch1980}, \cite[prop. 6.4]{BD1985}
  A representation of $G$ on a complex vector space $V$ is isomorphic to the complexification of representation of $K$ if and only if there exists a $G$-invariant nondegenerate complex bilinear form on $V$ \emph{(}we say that the representation is orthogonalizable\emph{)}.
\end{prop}

\begin{exmp}
  The representation of $\mathbb{C}^{\times}$ given in example \ref{baby} lets the form $B(z,w)=zw$ invariant. It is the complexification of the standard representation of $\mathrm{SO}(2; \mathbb{R})$ on $\mathbb{R}^2$, the eigenspaces for the weights $\lambda$ and $\lambda^{-1}$ being the lines $\mathbb{C}(1, \mathbf{-i})$ and $\mathbb{C}(1, \mathbf{i}$).
\end{exmp}

\begin{exmp}
  The representation of $\mathrm{SL}(2; \mathbb{C})$ on the vector space $V_d$ of binary forms admits an invariant bilinear form, which is the transvectant of maximal degree  $(F, G)_d$, also called polarity pairing. This form is nondegenerate, symmetric if $d$ is even, and skew-symmetric if $d$ is odd. Hence $V_d$ is the complexification of a real representation of $\mathrm{SU}(2; \mathbb{C})$ if $d$ is even. If $d$ is odd, this is not the case : $V_d$ is of quaternionic type (see \cite[prop. 6.4]{BD1985}).
\end{exmp}

Before studying complexified orbits, we recall a linear algebra lemma that will be used several times.

\begin{lem}\label{lem:endo-stable}
  Let $u$ be a diagonalizable endomorphism of a vector space $E$ with real eigenvalues. If $F\subset E$ is stable by $\mathrm{exp}(u)$, then $F$ is stable by $u$.
\end{lem}

\begin{proof}
  Let $\lambda_{1}, \ldots, \lambda_k$ be the distinct eigenvalues of $u$, and let $P$ be a polynomial such that for any $i$, $P(e^{\lambda_i})=\lambda_i$. Then $P(\mathrm{exp}(u))=u$, since this equality can be checked on a basis of eigenvectors of $u$.
\end{proof}

We fix a real representation $(K, W)$.
\begin{prop}\label{prop:orbites}
  For any $w$ in $W$, the following properties hold :
  \begin{enumerate}
    \item[(i)] $G_{w}=(K_{w})^{\CC}$,
    \item[(ii)] $G \cdot w$ is closed \cite[lem. 2.2]{EJ2009},
    \item[(iii)] $(G \cdot w) \cap W = K \cdot w$.
  \end{enumerate}
\end{prop}

\begin{proof}
  (i) by corollary~\ref{cor:totrel}, $K_{w}$ is Zariski dense in $(K_{w})^{\CC}$ so $(K_{w})^{\CC}$ fixes $w$ for the complexified representation. This yields the inclusion $(K_{w})^{\CC} \subset G_{w}$. For the converse implication, let us denote by $d\rho$ the differential of the $K$-action at the origin and let $\ell=ke^{\mathbf{i}Z}$ be an element of $G_{w}$. If $\iota$ is a Cartan involution, $\iota(\ell)$ belongs to $G_{w}$ too, so $e^{2\mathbf{i}Z}=\iota(\ell)^{-1} \ell$ belongs to $G_{w}$. It means that $\exp(2\mathbf{i}\,d\rho(Z))(w)=w$. The element $2\mathbf{i} \,d\rho(Z)$ is a complex hermitian endomorphism of $W$, so it is diagonalizable with real eigenvalues. Thanks to lemma \ref{lem:endo-stable}, $d\rho(Z)(w)=0$, so $Z$ belongs to $\mathrm{Lie}(G_{w})$. Hence, $e^{\mathbf{i}Z}$ fixes $w$ so $k$ fixes $w$ too. It follows that $k$ belongs to $K_{w}$, which proves that $\ell$ belongs to $(K_{w})^{\CC}$.

  (ii) Let us fix an inner product on $W$ such that $K$ acts by orthogonal transformations, and extend it to an hermitian product on $V$. The moment map $\mu \colon V \rightarrow \mathfrak{k}^*$ is given by
  \begin{equation*}
    \mu(v)(\xi)=\frac{1}{2\mathbf{i}} \langle \xi \cdot v | v \rangle
  \end{equation*}
  This definition makes sense because $\xi$ belongs to $\mathfrak{u}(V)$ so it is skew-hermitian (so the above expression is real). Kempf-Ness theorem states that an orbit $G^{\CC} \cdot v$ is closed if and only $G^{\CC} \cdot v \cap \mu^{-1}(0) \neq \emptyset$ (see for instance \cite{KN1979}, \cite[thm. 4]{Los2006}). However, since $K$ acts by orthogonal transformations, the moment map $\mu$ vanishes on $W$. The result follows.

  (iii) One inclusion is obvious. For the other inclusion, let $L=K_w$. The orbit $G \cdot w$ is isomorphic to $G/L^{\CC}$. Let $\iota$ denotes the Cartan involution on $G^{\CC}$. Assume that $g.w$ belongs to $W$ for some element $g$ in $G$. Then $\iota(g).w=\overline{g.w}=g.w$ so $\iota(g)^{-1}g$ belongs to $G_w=L^{\CC}$ by (i). If $g=ke^{iZ}$, then $e^{2\mathbf{i}Z}$ belongs to $L^{\CC}$. It implies that $Z$ is in $\mathfrak{l}$, so $e^{\mathbf{i}Z}$ is in $L^{\CC}$. Hence $g.w = k.w$.
\end{proof}

\begin{rem}
  The result~\cite[prop. 2.3]{BH1962} predicts in our case that the intersection of a complex orbit with real points is a \textit{finite union} of real orbits. Hence this result is weaker, but holds for more general groups.
\end{rem}

\begin{exmp}\label{ex:SL2R}
  Let us give an example where this intersection is strictly bigger than the real orbit. For this we take again Example~\ref{ex:sournois}, but with the action of $\mathrm{GL}(2; \CC)$ instead of $\mathrm{SL}(2; \CC)$ acting on the vector space of degree $3$ binary forms. Then the $\mathrm{GL}(2; \RR)$-orbit of $z(w-z)w$ consists of all binary forms of degree $3$ with real coefficients and 3 distinct roots. However, real binary forms that are in the $\mathrm{GL}(2; \CC)$ orbit of $z(w-z)w$ consist of real binary forms with 3 distinct roots (not necessarily real). This locus is a union of two $\mathrm{GL}(2; \RR)$ orbits: the orbit of $z(w-z)w$ and the orbit of $z(z^2+w^2)$. There is no contradiction with the result we proved, because $\mathrm{GL}(2; \RR)$ is not compact. As a matter of fact, $\mathrm{GL}(2; \RR)$ is the split real form of $\mathrm{GL}(2; \CC)$ and not the compact one, which is $\mathrm{U}(2)$.
\end{exmp}

\begin{prop}\label{prop:delicat} \cite[prop. 5.8 (2), (3)]{Sch1980}
  Given $(K, W, \rho)$, the following properties hold~:
  \begin{enumerate}
    \item[(i)] Orbit types of $\rho^{\mathbb{C}}$ are in bijection with complexifications of orbit types of $\rho$.
    \item[(ii)] For any real orbit type $[L]$ of $\rho$,
          $
            W^{\langle L \rangle} = V^{\langle L^{\mathbb{C}}\rangle} \cap W.
          $
          In particular, $W^{\langle L \rangle}$ is a nonempty Zariski open subset of $W^L$.
    \item[(iii)] There exists a $G$-stable and nonempty Zariski open subset $U$ of $V$ such that all orbits of points in $U$ are closed.
  \end{enumerate}
\end{prop}

\begin{proof}
  By Matsushima's criterion, any orbit type is reductive, so it is the complexification of a compact subgroup of $G$. This compact subgroup is included in a maximal compact subgroup of $G$, and since all these maximal compact subgroups are conjugate, we can assume that the orbit type is of the form $[L^{\mathbb{C}}]$ where $L$ is a subgroup of $K$. It remains to prove that $[L]$ is an orbit type of $\rho$. To prove it, we consider $V^{\langle L^{\mathbb{C}} \rangle}$, which is Zariski open in $V^{G}$. Since $V^{L^{\mathbb{C}}}$ is the complexification of $W^L$, $W^L$ is Zariski dense in $V^{L^{\mathbb{C}}}$ so $V^{\langle L^{\mathbb{C}}\rangle} \cap W^L \neq \emptyset$. If $w$ is in this intersection, $K_w=G_w \cap K = L^{\mathbb{C}} \cap K = L$. Conversely, if $w$ is a vector in $W$ such that $K_{w}=L$, thanks to proposition~\ref{prop:orbites} (ii), $G \cdot w $ is closed, and thanks \textit{loc. cit.} (i), $G_w=L^{\mathbb{C}}$. Hence $L^{\mathbb{C}}$ is a complex orbit type and $w$ belongs to $V^{\langle L^{\mathbb{C}}\rangle}$. This gives (i) as well as the first part of (ii). The second part follows from proposition \ref{prop:zdens}. For (iii), let $[H]$ be the minimal isotropy class of $\rho$. Then $[H^{\mathbb{C}}]$ is also the minimal isotropy class of $\rho^{\mathbb{C}}$, so points with closed orbits having isotropy $[H]$ is a Zariski open subset of $V$.
\end{proof}

\begin{rem} \label{rem:sink}
  Proposition \ref{prop:delicat} (i) is the same as \cite[prop. 5.8 (2)]{Sch1980}, but the proof given here is incomplete. The first part of the proof is exactly the same : if $[L^{\mathbb{C}}]$ is an orbit type, we have
  \[
    \emptyset \neq V^{\langle L^{\mathbb{C}}\rangle} \cap W^L \subset W^{\langle L \rangle}.
  \]
  The author then adds that $Z_{[L^{\CC}]}\cap S \subset S_{[L]}$, which has no reason a priori to be true without an additional argument. Indeed, thanks to proposition \ref{prop:direct}, $Z_{[L^{\CC}]}=p(V^{L^{\CC}})$. Hence an element in $Z_{[L^{\CC}]} \cap S \subset S_{[L]}$ can be written as $p(w)$ where $w$ is a vactor of $W$ such that $G \cdot w$ is closed and $G_{g.w}=L^{\CC}$, so $K_{w}^{\CC}=g^{-1}L^{\CC}g$ and it is needed to prove that $K_w$ is conjugate to $L$ in $K$. This is nevertheless true thanks to proposition \ref{prop:vaudaine}.
\end{rem}

Let $p \colon W \rightarrow W / K$ be the natural projection and $S$ its image inside $V // G$, $S$ is a semi-algebraic subset of $Z$.

\begin{prop} \label{prop:vengeance} \cite[prop. 5.8 (3)]{Sch1980}, see remark \ref{rem:sink}.
  Let $(W, K, \rho)$ be a real representation of a compact group $K$. If $[L]$ is an orbit type of $\rho$ and $S_{[L]}=p(\Sigma_{[L]})$, then\emph{:}
  \begin{enumerate}
    \item[(i)] $S_{[L]}=Z_{[L^{\CC}]} \cap S$,
    \item[(ii)] $\Sigma_{[L]}=\Omega_{[L^{\CC}]} \cap W$.
  \end{enumerate}
\end{prop}

\begin{proof}
  The inclusion $S_{[L]}\subset Z_{[L^{\CC}]} \cap S$ follows from proposition \ref{prop:orbites}. For the reverse inclusion, let $G=K^{\CC}$ and $H=L^{\CC}$. A point in $Z_{[L^{\mathbb{C}}]} \cap S$ can be written as $p(w)$ where $w$ is in $W$ and $G_{g.w}=H$ for some $g$ in $G$. Thus, $G_w=g^{-1}Hg$ so $[G_w]=[H]$. Thanks to proposition \ref{prop:vaudaine} and proposition \ref{prop:orbites}, $[K_w]=[L]$ so $w$ belongs to $S_{[L]}$. The second point follows from the first using proposition \ref{prop:direct} (iv).
\end{proof}

\begin{lem}\label{lem:complexification} \cite[prop. 5.8 (1)]{Sch1980}
  The three complex algebras $(\RR[W]^{K})^{\CC}$, $\CC[V]^{K}$ and $\CC[V]^{G}$ are naturally isomorphic.
\end{lem}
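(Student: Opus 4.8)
The plan is to prove the three isomorphisms in two steps, identifying $\CC[V^{\CC}]^{G}$ as the common middle term. First I would show $(\RR[V]^{G})^{\CC} \simeq \CC[V^{\CC}]^{G}$. The natural inclusion $\RR[V] \hookrightarrow \CC[V^{\CC}]$ (a real polynomial on $V$ extends uniquely to a complex polynomial on $V^{\CC}$) induces a map $\RR[V]^{G} \to \CC[V^{\CC}]^{G}$, and since the target is a $\CC$-algebra this extends to a $\CC$-algebra homomorphism $(\RR[V]^{G})^{\CC} = \RR[V]^{G} \otimes_{\RR} \CC \to \CC[V^{\CC}]^{G}$. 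Injectivity is immediate because $\RR[V] \otimes_\RR \CC \to \CC[V^\CC]$ is already an isomorphism (a complex polynomial on $V^\CC$ decomposes uniquely into real and imaginary parts of its coefficients), so the map is just its restriction to $G$-invariants. For surjectivity, take $p \in \CC[V^{\CC}]^{G}$ and write $p = p_{1} + i p_{2}$ with $p_{1}, p_{2} \in \RR[V]$ (the real polynomials obtained by splitting the coefficients of $p$ in a real basis of $V$); since the $G$-action on $V^\CC$ is the complexification of a \emph{real} representation, it commutes with complex conjugation on $V^\CC$, hence $\overline{g \cdot v} = g \cdot \overline v$ and $G$-invariance of $p$ forces $G$-invariance of $p_{1}$ and $p_{2}$ separately. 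Thus $p$ lies in the image of $\RR[V]^G \otimes_\RR \CC$, giving the first isomorphism.

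Next I would show $\CC[V^{\CC}]^{G} = \CC[V^{\CC}]^{G^{\CC}}$. One inclusion is trivial since $G \subset G^{\CC}$. For the other, let $p \in \CC[V^{\CC}]^{G}$ and fix $v \in V^{\CC}$; the function $g \mapsto p(g \cdot v) - p(v)$ is holomorphic on $G^{\CC}$ (being the composition of the algebraic action map, which is holomorphic by the proposition preceding the complexification-of-a-representation subsection, with the polynomial $p$ and evaluation) and vanishes on $G$. By corollary~\ref{cor:totrel}, $G$ is analytically Zariski dense in $G^{\CC}$, so this holomorphic function vanishes identically on $G^{\CC}$; hence $p(g \cdot v) = p(v)$ for all $g \in G^{\CC}$ and all $v$, i.e.\ $p \in \CC[V^{\CC}]^{G^{\CC}}$. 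Alternatively one can invoke corollary~\ref{cor:cx} directly with $W = \CC[V^{\CC}]$, on which $G^{\CC}$ acts (locally finitely, degree by degree) holomorphically, to conclude $W^{G^{\CC}} = W^{G}$ in one stroke. Chaining the two isomorphisms yields the claim.

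I do not expect a serious obstacle here; the only point requiring care is the compatibility of the $G$-action with complex conjugation on $V^{\CC}$ used for surjectivity in the first step — this is exactly where the hypothesis that $\rho^{\CC}$ arises as the complexification of a \emph{real} representation $\rho$ of $G$ enters, and without it the statement would fail. A second minor point is that when applying corollary~\ref{cor:cx} to $W = \CC[V^{\CC}]$ one should note $W$ is infinite-dimensional but the corollary is stated for non-necessarily finite dimensional $W$, so this is legitimate; it suffices that the action be holomorphic, which follows from it being algebraic on each finite-dimensional graded piece.
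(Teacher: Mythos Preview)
Your proof is correct and follows essentially the same route as the paper, which simply observes $\RR[V]^{\CC}\simeq \CC[V^{\CC}]$ and then invokes corollary~\ref{cor:cx}; you have merely spelled out both steps in detail (including the real/imaginary-part argument for the first isomorphism, which the paper leaves implicit), and your direct Zariski-density argument for the second isomorphism is in fact exactly the proof of corollary~\ref{cor:cx} itself.
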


\begin{proof}
  We have $\RR[W]^{\CC}\simeq \CC[V]$. Then the result follows from corollary~\ref{cor:cx}.
\end{proof}
%
%
%
%
%

\subsection{Normalizers of subgroups of compact Lie groups}

The aim of this section is to prove that the complexification of the normalizer of a compact subgroup of a compact Lie group is naturally isomorphic to the normalizer of the corresponding complexified subgroup (proposition~\ref{prop:NH-complexification}). This result is in accordance with \cite[lem. 1.1]{LR1979}.

\begin{lem}\label{lem:Lie-NH}
  Let $L$ be a closed subgroup of a compact Lie group $K$ and let $\Gamma$ be a set of representatives in $L$ of the finite group $L/L^{e}$, where $L^{e}$ is the identity component of $L$. Then,
  \begin{equation*}
    \mathrm{Lie}(N_K(L)) = N_{\mathfrak{k}}(\mathfrak{l})\cap\set{X\in \mathfrak{k},\ \forall \gamma\in \Gamma,\ \Ad(\gamma)(X)-X\in \mathfrak{l}}.
  \end{equation*}
\end{lem}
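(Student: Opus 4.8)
The statement identifies $\mathrm{Lie}(N(H))$ inside $\mathfrak{g}$. The natural strategy is to show that $N(H)$ contains, as an open subgroup near the identity, the connected subgroup whose Lie algebra is the right-hand side, and conversely that any one-parameter subgroup of $N(H)$ has generator in the right-hand side. Let me write $\mathfrak{a}$ for the proposed Lie algebra $N(\mathfrak{h})\cap\set{X\in\mathfrak{g};\ \forall\gamma\in\Gamma,\ \Ad(\gamma)(X)-X\in\mathfrak{h}}$.

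First I would prove the inclusion $\mathrm{Lie}(N(H))\subset\mathfrak{a}$. Let $X\in\mathrm{Lie}(N(H))$, so $\exp(tX)\in N(H)$ for all $t\in\RR$. Then conjugation by $\exp(tX)$ is an automorphism of $H$, hence of $H^e$, hence of $\mathfrak{h}$; differentiating the relation $\exp(tX)H^e\exp(-tX)=H^e$ at $t=0$ gives $\Ad(\exp(tX))\mathfrak{h}=\mathfrak{h}$, and differentiating once more in $t$ at $t=0$ gives $[X,\mathfrak{h}]\subset\mathfrak{h}$, i.e. $X\in N(\mathfrak{h})$. For the second condition, fix $\gamma\in\Gamma$. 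Since $\exp(tX)\in N(H)$, the element $\exp(tX)\gamma\exp(-tX)$ lies in $H$ and lies in the same connected component of $H$ as $\gamma$ (because $t\mapsto\exp(tX)\gamma\exp(-tX)$ is a continuous path in $H$ starting at $\gamma$). Hence $\exp(tX)\gamma\exp(-tX)\gamma^{-1}\in H^e$ for all $t$; this is a curve in $H^e$ through the identity, so its derivative at $t=0$ lies in $\mathfrak{h}$, and that derivative is precisely $X-\Ad(\gamma)(X)$. Thus $\Ad(\gamma)(X)-X\in\mathfrak{h}$, giving $X\in\mathfrak{a}$.

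For the reverse inclusion $\mathfrak{a}\subset\mathrm{Lie}(N(H))$, the point is that $\mathfrak{a}$ is a Lie subalgebra of $\mathfrak{g}$ (this needs a quick check: $N(\mathfrak{h})$ is a subalgebra, and the set $\set{X;\ \Ad(\gamma)X-X\in\mathfrak{h}\ \forall\gamma}$ is stable under the bracket when restricted to $N(\mathfrak{h})$, using that $\Ad(\gamma)$ is a Lie algebra automorphism preserving $\mathfrak{h}$ — for $X,Y\in\mathfrak{a}$ one computes $\Ad(\gamma)[X,Y]-[X,Y]=[\Ad(\gamma)X,\Ad(\gamma)Y]-[X,Y]$ and expands, with each term landing in $\mathfrak{h}$). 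Let $A$ be the connected Lie subgroup of $G$ with Lie algebra $\mathfrak{a}$. Given $X\in\mathfrak{a}$, I must show $\exp(tX)$ normalizes $H$. It normalizes $H^e$: the condition $[X,\mathfrak{h}]\subset\mathfrak{h}$ integrates to $\Ad(\exp(tX))\mathfrak{h}=\mathfrak{h}$, hence $\exp(tX)H^e\exp(-tX)=H^e$ since $H^e$ is connected with Lie algebra $\mathfrak{h}$. It remains to see $\exp(tX)$ normalizes all of $H$, equivalently that for each $\gamma\in\Gamma$ one has $\exp(tX)\gamma\exp(-tX)\in H$. Now $\exp(tX)\gamma\exp(-tX)\gamma^{-1}$: I would like this to lie in $H^e$. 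Consider the curve $c(t)=\exp(tX)\gamma\exp(-tX)\gamma^{-1}$ in $G$; it satisfies $c(0)=e$ and $c'(0)=X-\Ad(\gamma)(X)\in\mathfrak{h}$ by hypothesis. But more is true: one checks $c(t)=\exp(tX)\exp(-t\,\Ad(\gamma)X)$, and since both $X$ and $\Ad(\gamma)X$ lie in the subspace $\mathfrak{h}+\RR X$ — indeed $\Ad(\gamma)X = X + (\Ad(\gamma)X-X)\in X+\mathfrak{h}$ — the whole curve $c(t)$ stays in the connected subgroup generated by $\exp(\RR X)$ and $H^e$, which is $H^e\cdot\exp(\RR X)$; intersecting with the requirement that $c(t)\in H$ for $t$ small (by the usual argument: $\frac{d}{dt}$ of the $H^e$-coordinate) forces $c(t)\in H^e$ for all $t$. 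The cleanest way to make this rigorous is: since $\mathfrak{h}$ is an ideal in $\mathfrak{h}+\mathfrak{a}$ (check: $[\mathfrak a,\mathfrak h]\subset\mathfrak h$ as $\mathfrak a\subset N(\mathfrak h)$), the subgroup $H^e\cdot A$ has Lie algebra $\mathfrak{h}+\mathfrak{a}$ and $H^e$ is normal in it; working in the Lie group $(H^e\cdot A)/H^e$ with Lie algebra $(\mathfrak h+\mathfrak a)/\mathfrak h$, the image of $c(t)$ is $\exp(t\bar X)\exp(-t\overline{\Ad(\gamma)X})=\exp(t\bar X)\exp(-t\bar X)=e$ since $\bar X=\overline{\Ad(\gamma)X}$ in the quotient; hence $c(t)\in H^e\subset H$, so $\exp(tX)\gamma\exp(-tX)=c(t)\gamma\in H$, and $\exp(tX)\in N(H)$.

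The main obstacle is precisely this last step: passing from the infinitesimal condition $\Ad(\gamma)X-X\in\mathfrak{h}$ to the global statement that $\exp(tX)$ conjugates each component representative $\gamma$ back into $H$. The naive derivative argument only controls $t=0$; the trick that makes it work is the observation that $\mathfrak{h}$ is an ideal in $\mathfrak{h}+\mathfrak{a}$, allowing one to quotient by $H^e$ and reduce to an identity between one-parameter subgroups in a smaller group. Everything else — the two differentiations in the forward inclusion, and checking $\mathfrak{a}$ is a subalgebra — is routine. One should also remark at the end that, since $N(H)$ is closed in $G$ hence a Lie subgroup, these two inclusions of Lie algebras indeed determine $\mathrm{Lie}(N(H))$.
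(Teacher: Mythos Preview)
Your argument is correct. The forward inclusion $\mathrm{Lie}(N(H))\subset\mathfrak{a}$ is exactly the paper's, but for the reverse inclusion you take a genuinely different route: the paper applies the Baker--Campbell--Hausdorff formula to $e^{tX}e^{-t\Ad(\gamma)X}$ and checks term by term that every iterated bracket lands in $\mathfrak{h}$ (valid for small $t$, which suffices since $N(H)$ is closed), whereas you pass to a quotient by $H^e$ in which $X$ and $\Ad(\gamma)X$ have the same image, so that $c(t)$ becomes visibly trivial. Your approach is more structural, works for all $t$ at once, and avoids BCH entirely; the paper's is more computational but self-contained.

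One technical refinement: the group $H^e\cdot A$ you form is an immersed (possibly non-closed) subgroup, and verifying that $H^e$ sits inside it as a closed normal subgroup with the right quotient structure needs some care. It is cleaner to work directly in $N_G(H^e)$, which is a closed subgroup of $G$ with Lie algebra $N(\mathfrak{h})$: since both $X$ and $\Ad(\gamma)X=X+(\Ad(\gamma)X-X)$ lie in $N(\mathfrak{h})$, one has $c(t)\in N_G(H^e)$, and the quotient homomorphism $\pi\colon N_G(H^e)\to N_G(H^e)/H^e$ sends $c(t)$ to $\exp(t\,d\pi(X))\exp(-t\,d\pi(X))=e$, giving $c(t)\in H^e$ for all $t$ exactly as you intended. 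Incidentally, your verification that $\mathfrak{a}$ is a Lie subalgebra is correct but not needed: the argument runs for each $X\in\mathfrak{a}$ individually without ever forming the group $A$.
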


\begin{proof}
  We have $\mathrm{Lie}(N_{K^e}(L^e))=N_{\mathfrak{k}}(\mathfrak{l})$, and since the group $(N_K(L))^e$ is included in $N_{K^e}(L^e)$, $\mathrm{Lie}(N_K(L))$ is included in  $N(\mathfrak{l})$. For $\gamma$ in $\Gamma$ and $X$ in $\mathfrak{k}$, $e^{tX} \gamma e^{-tX}$ belongs to the connected component $L^e \gamma$ of $L$ containing $\gamma$. Consequently, $e^{tX}\gamma e^{-tX} \gamma^{-1}\in L$. Taking the derivative at $t=0$, we get that $X-\Ad(\gamma)(X)$ belongs to $\mathfrak{l}$.

  Conversely, let $X\in N_{\mathfrak{k}}(\mathfrak{l})\cap\set{X\in \mathfrak{k},\ \forall \gamma\in \Gamma,\ \Ad(\gamma)(X)-X\in \mathfrak{l}}$. Since $X$ belongs to $N_{\mathfrak{k}}(\mathfrak{l})$,  $e^{tX}L^e e^{-tX} \subset L^e$. Let $\gamma$ be in $\Gamma$. Then
  \begin{equation*}
    e^{tX}\gamma e^{-tX}\gamma^{-1}=e^{tX}e^{-\Ad(\gamma)(tX)}
    =e^{tX}e^{t(X-\Ad(\gamma)(X))-tX}.
  \end{equation*}
  Let $Y=X-\Ad(\gamma)(X) \in \mathfrak{l}$. By applying Baker-Campbell-Hausdorff formula on $e^{tX} e^{tY-tX}$, we obtain that for $t$ sufficiently small $e^{tX}	\gamma e^{-tX}\gamma^{-1}$ is equal to $e^{Z}$ where $Z$ is the sum of $tX+(tY-tX)$ and a convergent sum of iterated brackets of $X$ and $Y$. Hence $Z$ belongs to $\mathfrak{l}$, which implies that $e^{tX}\gamma e^{-tX}\gamma^{-1}$ belongs to $L^e$ for $t$ sufficiently small, then for all $t$. Hence for any $\eta$ in $L^e$,
  \begin{equation*}
    e^{tX}(\gamma \eta) e^{-tX}=\underbrace{(e^{tX}\gamma e^{-tX} \gamma^{-1})}_{\in L^e} \gamma \underbrace{(e^{tX}\eta e^{-tX})}_{\in L^e}\in L^e \gamma L^e \subset L.
  \end{equation*}
  Consequently, $e^{tX}$ normalizes $L$, which implies that $X$ belongs to $\mathrm{Lie}(N_K(L))$.
\end{proof}

\begin{lem}\label{lem:A-B}
  Let $\mathfrak{l}$ be a Lie subalgebra of $\mathfrak{u}(n)$, $Z\in \mathfrak{l}$, $B\in N_{\mathfrak{u}(n)}(\mathfrak{l})$. Then $e^{\mathbf{i}B}e^{\mathbf{i}Z}e^{\mathbf{i}B}$ is hermitian positive definite, and if we write its polar decomposition as $e^{2\mathbf{i}A}$ where $A$ is in $\mathfrak{u}(n)$, then $A-B\in \mathfrak{l}$.
\end{lem}

\begin{proof}
  Consider the following real analytic function
  \begin{align*}
    f \colon \mathfrak{u}(n) \times N_{\mathfrak{u}(n)}(\mathfrak{l}) & \to \mathfrak{u}(n) \\ (Z,B) &\mapsto \frac{1}{\mathbf{i}}\log (e^{\mathbf{i}B}e^{\mathbf{i}Z}e^{\mathbf{i}B})-2B.
  \end{align*}
  We apply Baker-Campbell-Hausdorff formula two times on $e^{\mathbf{i}B}e^{\mathbf{i}Z}e^{\mathbf{i}B}$ for $(Z, B)$ in a neighborhood of $0$. It yields $e^{\mathbf{i}Z}e^{\mathbf{i}B}=e^{iW}$ where $W \in \mathbf{i}(Z+B) + \mathfrak{l}^{\CC} = \mathbf{i}B + \mathfrak{l}^{\CC}$, and then $e^{\mathbf{i}B}e^{\mathbf{i}W}=e^{\mathbf{i}Y}$ where $Y\in \mathbf{i}(W+B)+\mathfrak{l}^{\CC}=2\mathbf{i}B + \mathfrak{l}^{\mathbb{C}}$. Hence,
  \[
    \frac{1}{\mathbf{i}}\log (e^{\mathbf{i}B}e^{\mathbf{i}Z}e^{\mathbf{i}B}) \in (2B + \mathfrak{l}^{\CC}) \cap \mathfrak{u}(n)=2B + \mathfrak{l}
  \]
  so $f(Z,B)$ takes values in $\mathfrak{l}$ for $B$ and $Z$ sufficiently close to zero. Since $f$ is real analytic, this is valid everywhere.
\end{proof}

\begin{prop}[see {\cite[lem. 1.1]{LR1979}}] \label{prop:NH-complexification}
  If $L$ is a closed subgroup of a compact Lie group $K$, let $G=K^{\CC}$ and let $H=L^{\CC}$. Then $
    N_{G}(H)=(N_{K}(L))^{\CC}.$
\end{prop}

\begin{proof}
  If $k$ is in $N_{K}(L)$, then, $kLk^{-1} =  L \subset H$. Consider the holomorphic application
  \begin{align*}
    C(k) \colon H & \to G/H           \\
    h             & \mapsto khk^{-1}.
  \end{align*}
  Then, $C(k)$ vanishes on $L$, and thanks to corollary~\ref{cor:totrel}, $C(k)$ vanishes on $H$. Hence $kHk^{-1} \subset H$ so $k$ normalizes $H$ in $G$. Hence $N_{K}(L)\subset N_{G}(H)$ and by theorem~\ref{prop:Chevalley-complexification} we get the natural inclusion
  \begin{equation*}
    (N_{K}(L))^{\CC}\subset N_{G}(H).
  \end{equation*}
  To prove the converse inclusion, let $g$ be in $N_{G}(H)$. By theorem~\ref{prop:Chevalley-complexification}, we can write $g=ke^{\mathbf{i}X}$ with $k$ in $K$ and $X$ in $\mathfrak{k}$. We need to prove that $k$ (resp. $X$) belongs to $N_K(L)$ (resp. to $\mathrm{Lie}(N_K(L))$. To prove that $X\in \mathrm{Lie}(N_K(L))$, we use the description of $\mathrm{Lie}(N_K(L))$ given in lemma~\ref{lem:Lie-NH}.

  On one hand, we have $\iota(g)=ke^{-\mathbf{i}X}$ so $e^{2\mathbf{i}X}=\iota(g)^{-1}g$. Besides, the normalizer of $H$ is stable under the Cartan involution $\iota$, and hence $e^{2\mathbf{i}X}$ normalizes $H$. This implies that $\Ad(e^{2\mathbf{i}X})(\mathfrak{h}^{\CC})= \mathfrak{h}^{\CC}$, where $\mathfrak{h}$ is the Lie algebra of $H$. Now $\mathrm{Ad}(e^{2\mathbf{i}X})=\mathrm{Exp}(2\mathbf{i} \mathrm{ad}(X))$. Since $X$ is in $\mathfrak{u}(n)$, $2\mathbf{i} \mathrm{ad}(X)$ is hermitian, hence diagonalizable with real eigenvalues. Thanks to lemma~\ref{lem:endo-stable}, $\mathrm{ad}(X)(\mathfrak{h}^{\CC})\subset \mathfrak{h}^{\CC}$. Since $X$ is in $\mathfrak{u}(n)$, $X$ belongs to $ N_{\mathfrak{g}}(\mathfrak{h})$.

  On the other hand, let $\gamma$ in $\Gamma$. Then $e^{2\mathbf{i}X}\gamma e^{-2\mathbf{i}X}$ belongs to $H$. Let $e^{\mathbf{i}Z}$ be the hermitian part of its polar decomposition, where $Z \in \mathfrak{h}$. Then $(\iota(e^{2\mathbf{i}X}\gamma e^{-2\mathbf{i}X}))^{-1}(e^{2\mathbf{i}X}\gamma e^{-2\mathbf{i}X})=e^{2\mathbf{i}Z}$ but by explicit calculation,
  \begin{align*}
    (\iota(e^{2\mathbf{i}X}\gamma e^{-2\mathbf{i}X}))^{-1}(e^{2\mathbf{i}X}\gamma e^{-2\mathbf{i}X}) & = (e^{-2\mathbf{i}X}\gamma e^{2\mathbf{i}X})^{-1}(e^{2\mathbf{i}X}\gamma e^{-2\mathbf{i}X}) \\
                                                                                                     & = e^{-2\mathbf{i}X} \left(\gamma^{-1} e^{4\mathbf{i}X} \gamma\right) e^{-2\mathbf{i}X}      \\
                                                                                                     & = e^{-2\mathbf{i}X}e^{4\mathbf{i}\Ad(\gamma^{-1})(X)}e^{-2\mathbf{i}X}.
  \end{align*}
  Therefore, combining the two equalities together, we get
  \begin{equation*}
    e^{2\mathbf{i}Z}=e^{-2\mathbf{i}X}e^{4\mathbf{i}\Ad(\gamma^{-1})(X)}e^{-2\mathbf{i}X}
  \end{equation*}
  that is
  \begin{equation*}
    e^{4\mathbf{i}\Ad(\gamma^{-1})(X)}=e^{2\mathbf{i}X}e^{2\mathbf{i}Z}e^{2\mathbf{i}X}.
  \end{equation*}
  Applying lemma~\ref{lem:A-B} with $A=2\Ad(\gamma^{-1})(X)\in \mathfrak{u}(n)$ and $B=2X\in N_{\mathfrak{k}}(\mathfrak{h})$, we have finally $\Ad(\gamma^{-1})(X)-X\in \mathfrak{l}$, so $X-\Ad(\gamma)(X) \in \mathfrak{l}$. This proves that $X$ belongs to $\mathrm{Lie}(N_K(L))$.
  Now, both $e^{\mathbf{i}X}$ and $ke^{\mathbf{i}X}$ normalize $H$ and so does $k$. Hence $kHk^{-1}=H$ so
  \begin{equation*}
    L=H \cap \mathrm{U}(n)=kHk^{-1} \cap \mathrm{U}(n)=k (H\cap \mathrm{U}(n))k^{-1}=kLk^{-1}
  \end{equation*}
  and $k$ normalizes $L$.
\end{proof}

\section{Proof of the main theorems}
\label{sec:proof-main-theorems}

\subsection{The algebraicity theorem (theorem \ref{thm:first-main-theorem})}

We prove a more precise result than theorem \ref{thm:first-main-theorem}, namely that $\overline{\Sigma}_{[L]}$ equals $\Lambda_{[L^{\CC}]} \cap W$. This means that the closed real stratum consists of the real points of the closed complex stratum.

In one direction, any point $w$ in $\overline{\Sigma}_{[L]}$ belongs to some stratum $\Sigma_{[M]}$ where $[L] \preceq [M]$. Since $w$ has a closed orbit with stabilizer $M^{\CC}$ and since $[L^{\CC}] \preceq [M^{\CC}]$, $w$ belongs to $\Lambda_{[L^{\CC}]}$.

Conversely, assume that an element $w$ of $W$ belongs to $\Lambda_{[L^{\CC}]}$. Then the orbit of $w$ is also closed, and $[G_w]=[M^{\CC}]$ where $[M]$ is a real orbit type, and $[L^{\CC}] \preceq [M^{\CC}]$. By proposition \ref{prop:vaudaine}, $[L] \preceq [M]$ so $w$ belongs to $\Sigma_{[M]}$, hence to $\overline{\Sigma}_{[L]}$.

\subsection{The normalization theorem (theorem \ref{thm:second-main-theorem})}
We consider the restriction morphism
\[
  \RR[W]^{K}  \to \RR[W^{L}]^{N_K(L)}.
\]
Thanks to lemma \ref{lem:complexification} and proposition \ref{prop:NH-complexification}, its complexification is the map
\[
  \RR[V]^{G}  \to \RR[V^{H}]^{N_G(H)}.
\]
where $G=K^{\CC}$ and $H=L^{\CC}$. Hence the complexification of the map we are interested in is the map
\[
  \mathrm{Im}\left(\CC[V]^{G}  \to \CC[V^{H}]^{N_G(H)}\right) \rightarrow \CC[V^{H}]^{N_G(H)}
\]
We look at the composition
\[
  \CC[V]^G \rightarrow \CC \left[\overline{G.V^H} \right]^G \rightarrow \CC[V^H]^{N_G(H)}.
\]
Since $G$ is reductive, the first arrow is surjective. The second arrow is injective. Hence we have a chain of morphisms
\[
  \CC \left[\overline{G.V^H} \right]^G \xrightarrow{\sim} \mathrm{Im}\left(\CC[V]^{G}  \to \CC[V^{H}]^{N_G(H)}\right) \rightarrow \CC[V^{H}]^{N_G(H)}
\]
and the corresponding morphism of affine varieties is
\[
V^H // N_G(H) \rightarrow \overline{G.V^H} // G.
\]
Thanks to proposition \ref{prop:normal}, this map is a normalization. Hence the initial map is also a normalization.


\end{document}